\newcommand{\hide}[1]{}
\numberwithin{equation}{section}
\newtheorem{theorem}[equation]{Theorem} 
\newtheorem*{theorem*}{Theorem}
\newtheorem{lemma}[equation]{Lemma}
\newtheorem*{lemma*}{Lemma}
\newtheorem*{claim*}{Claim}
\newtheorem{proposition}[equation]{Proposition}
\newtheorem{cory}[equation]{Corollary}
\theoremstyle{definition}
\newtheorem{remark}[equation]{Remark}
\newenvironment{myrem}{%
	\begin{mdframed}[%
		linewidth=2pt, 
		topline=false, 
		bottomline=false, 
		rightline=false,%
		leftmargin=0pt, 
		innerleftmargin=0.4em, 
		rightmargin=0pt, 
		innerrightmargin=0pt, 
		innertopmargin=0pt ,%
		innerbottommargin=0pt, 
		splittopskip=\topskip, 
		splitbottomskip=0.3\topskip, %
		]%
		\begin{remark}
		}{%
		\end{remark}
	\end{mdframed}
}
\newcommand{\beq}[1][]{ 
	\ifthenelse{\isempty{#1}}{\begin{equation}}{\begin{equation}\label{#1}} 
		}
		\newcommand{\eeq}{\end{equation}}
	\newtheorem*{rep@theorem}{\rep@title}
	\newcommand{\newreptheorem}[2]{%
		\newenvironment{rep#1}[1]{%
			\def\rep@title{#2 \ref{##1}}%
			\begin{rep@theorem}}%
			{\end{rep@theorem}}}
	\DeclareFontFamily{U}{mathx}{\hyphenchar\font45}
	\DeclareFontShape{U}{mathx}{m}{n}{<-> mathx10}{}
	\DeclareSymbolFont{mathx}{U}{mathx}{m}{n}
	\newcommand{\al}{{\alpha}}
	\newcommand{\be}{{\beta}}
	\newcommand{\eps}{{\varepsilon}}
	\newcommand{\de}{{\delta}}
	\newcommand{\De}{{\Delta}}
	\renewcommand{\phi}{{\varphi}}
	\renewcommand{\ge}{\geqslant}
	\renewcommand{\le}{\leqslant}
	\newcommand\N{\mathbb N}
	\newcommand\RR{\mathbb R}
	\newcommand\NN{\mathbb N}
	\renewcommand{\cal}[1]{{\mathcal #1}}
	\renewcommand{\part}[1][]{%
		\ifthenelse{\boolean{theno_parts}}{%
			\begin{enumerate}[wide,label=(\alph*),itemsep=1pt,topsep=0pt,#1]%
				\item
			}{%
				\item
			}
			\setboolean{theno_parts}{false}%
		}
		\newcommand{\trap}{%
			\ifthenelse{\boolean{theno_parts}}{}{%
			\end{enumerate}
		}
		\setboolean{theno_parts}{true}
	}
	\newcommand{\newop}[2]{%
		\expandafter\def\csname #1\endcsname{\operatorname{#2}}
	}
	\newcommand{\bits}{b}
	\newcommand\MT{\operatorname{M}}
	\newcommand\I{\operatorname{I}}
	\newcommand\B{\operatorname{B}}
	\newcommand\IB{\operatorname{IB}}
	\newcommand\Rel{\operatorname{Rel}}
	\newcommand\Var{\operatorname{Var}}
	\newcommand\Cl{\operatorname{Cla}}
	\newcommand{\used}{\textsc{Used}}
	\newcommand{\unused}{\textsc{Unused}}
	\newcommand{\Canvas}{\operatorname{Canvas}}
	\newcommand{\Trees}{\operatorname{Trees}}
	\newcommand{\me}{\mathrm{e}}
	\newcommand{\nn}{R}
\begin{document}

	\title{Moser--Tardos Algorithm with small number of random bits}
	\titlemark{Moser--Tardos Algorithm with small number of random bits}
	
	\emsauthor{1}{
		\givenname{Endre}
		\surname{Cs\'oka}
		\mrid{982650}
		\zblid{csoka.endre}
		\orcid{0000-0003-2945-6848}}{E.~Cs\'oka}
	\emsauthor{2}{
		\givenname{Łukasz}
		\surname{Grabowski}
		\mrid{993752}
		\zblid{grabowski.lukasz}
		\orcid{0000-0002-9470-4679}}{Ł.~Grabowski}
	\emsauthor{3}{
		\givenname{András}
		\surname{Máthé}
		\mrid{771705}
		\zblid{mathe.andras}
		\orcid{0000-0002-8898-8978}}{A.~Máthé}
	\emsauthor*{4}{
		\givenname{Oleg}
		\surname{Pikhurko}
		\mrid{334779}
		\zblid{pikhurko.oleg}
		\orcid{0000-0002-9657-4011}}{O.~Pikhurko}
	\emsauthor{5}{
		\givenname{Konstantinos}
		\surname{Tyros}
		\mrid{905104}
		\zblid{tyros.konstantinos}
		\orcid{0000-0002-8890-7074}}{K.~Tyros}
		
\Emsaffil{1}{
	\pretext{}
	\organisation{Alfr\'ed R\'enyi Institute of Mathematics}
	\zip{1053}
	\city{Budapest}
	\country{Hungary}
	\affemail{csokaendre@gmail.com}
	}		
\Emsaffil{2}{
	\pretext{}
	\department{Mathematics Institute}
	\organisation{Leipzig University}
	\zip{D-04009}
	\city{Leipzig}
	\country{Germany}
	\affemail{lukasz.grabowski@math.uni-leipzig.de}
}		
\Emsaffil{3}{
	\pretext{}
	\department{Mathematics Institute}
	\organisation{University of Warwick}
	\zip{CV4 7AL}
	\city{Coventry}
	\country{UK}
	\affemail{a.mathe@warwick.ac.uk}
}		
\Emsaffil{4}{
	\pretext{}
	\department{Mathematics Institute and DIMAP}
	\organisation{University of Warwick}
	\zip{CV4 7AL}
	\city{Coventry}
	\country{UK}
	\affemail{o.pikhurko@warwick.ac.uk}
}	
\Emsaffil{5}{
	\pretext{}
	\department{Department of Mathematics}
	\organisation{\\ University of Athens}
	\zip{157 84}
	\city{Athens}
	\country{Greece}
	\affemail{ktyros@math.uoa.gr}
}		
	
\classification[03E15, 05C85]{68W20}

\hide{
05D40  Probabilistic methods in extremal combinatorics, including polynomial methods
28A05  Classes of sets (Borel fields, $\sigma$-rings, etc.), measurable sets, Suslin sets, analytic sets 
68W20  Randomized algorithms
68W40  Analysis of algorithms
05C85  Graph algorithms (graph-theoretic aspects)
05C15  Coloring of graphs and hypergraphs
03E15  Descriptive set theory
}

\keywords{Borel combinatorics, Lov\'asz Local Lemma, randomised algorithm}

\begin{abstract}
	We study a variant of the parallel Moser--Tardos Algorithm. We prove that if we restrict attention to a class of problems whose dependency graphs have subexponential growth, then the expected total number of random bits used by the algorithm is constant; in particular, it is independent of the number of variables. This is achieved by using the same random bits to resample variables which are far enough in the dependency graph. 
	
	There are two corollaries. First, we obtain a deterministic algorithm for finding a satisfying assignment, which for any class of problems as in the previous paragraph runs in time $O(n)$, where $n$ is the number of variables.  Second, we present a Borel version of the Lov\'asz Local Lemma.
\end{abstract}

		\maketitle

		\setcounter{tocdepth}{2}
		\tableofcontents
		
		\newcommand{\rnd}{\normalfont{\texttt{rnd}}}
		\newcommand{\RND}{\normalfont{\texttt{RND}}}

		\section{Introduction}
		
		In this article, we are interested in vertex colouring problems on graphs.  An instance of such a problem consists of a digraph $G$, a natural number $b\in\NN$ which is the number of colours which we use, and a system of constraints $\mathbf R$ which we call a \emph{local rule}, i.e.~for $x\in V(G)$ we have that $\mathbf R(x)$ is a set of $b$-valued functions defined on the out-neighbourhood of~$x$, where we also use $b$ to denote the set $\{0,\dots,b-1\}$.  We say that $f\in b^{V(G)}$  \textit{satisfies $\mathbf R$} if  for every $x\in V(G)$ the restriction of $f$ to the out-neighbourhood of $x$ belongs to~$\mathbf R(x)$. 
		
		One of the most useful tools for deducing that a colouring that satisfies a given local rule exists is the 
		\emph{Lov{\'a}sz Local Lemma} (LLL for short) first proved in~\cite{MR0382050}. Let us state the following version, which follows from~\cite{MR0491337}.
		
		Let $G$ be a digraph and let $\Rel(G)$ be the symmetric digraph whose vertex set is $V(G)$ and such that there is an edge between $x$ and $y$ if there is $z$ such that $(x,z),(y,z)\in E(G)$ (we allow $x$ and $y$ to be equal, so there may be self-loops in $\Rel(G)$).
		
		\begin{theorem}[Lov{\'a}sz Local Lemma~\cite{MR0491337}]\label{blll} 
			Let $G$ be a digraph and let $\De$ be the maximal vertex degree in $\Rel(G)$. If for every $x\in V(G)$ we have 
			\begin{equation}\label{eq:blll}
				1 - \frac{|\mathbf R(x)|}{b^{|\Var(x)|}} < \frac{1}{\me\De},
			\end{equation} where $\Var(x)$ denotes the out-neighbourhood of $x$,  then there exists $f\in b^{ V(G)}$ which satisfies~$\mathbf R$.
		\end{theorem}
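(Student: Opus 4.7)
My plan is to reduce this to the classical probabilistic form of the Lovász Local Lemma. I would work on the product probability space $\Omega = b^{V(G)}$ equipped with the uniform measure on each coordinate, so a random $f \in \Omega$ assigns independent uniform colours to the vertices of $G$. For each $x \in V(G)$ define the bad event
\[
A_x = \{f \in \Omega : f|_{\Var(x)} \notin \mathbf R(x)\},
\]
whose probability equals $1 - |\mathbf R(x)|/b^{|\Var(x)|}$, and is therefore strictly less than $1/(\me \Delta)$ by the hypothesis \eqref{eq:blll}.

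The key structural observation is that $\Rel(G)$ serves as a valid dependency graph for the family $\{A_x\}_{x\in V(G)}$ in the sense required by the LLL. Indeed, each $A_x$ is measurable with respect to the coordinates indexed by $\Var(x)$. Whenever $S \subseteq V(G)$ is a set of vertices none of which is joined to $x$ in $\Rel(G)$, the definition of $\Rel(G)$ forces $\Var(x)$ to be disjoint from $\bigcup_{y \in S} \Var(y)$; independence of the coordinates of $\Omega$ then makes $A_x$ mutually independent of the $\sigma$-algebra generated by $\{A_y : y \in S\}$. Since $\Delta$ is the maximum degree of $\Rel(G)$ counting self-loops, each $A_x$ is independent of all but at most $\Delta-1$ of the other bad events.

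When $V(G)$ is finite, applying the symmetric form of the LLL from~\cite{MR0491337} with the uniform bound $\Pr(A_x) < 1/(\me \Delta)$ and dependency degree at most $\Delta-1$ yields $\Pr\!\bigl(\bigcap_{x}\overline{A_x}\bigr) > 0$, and any $f$ in this intersection satisfies $\mathbf R$. For infinite $V(G)$ I would finish by compactness: for each finite $F \subseteq V(G)$ the set
\[
C_F = \{f \in \Omega : f|_{\Var(x)} \in \mathbf R(x) \text{ for all } x \in F\}
\]
is closed and, by the finite case applied to the induced subgraph on $F \cup \bigcup_{x \in F} \Var(x)$, nonempty; the family $\{C_F\}$ has the finite intersection property, so compactness of $b^{V(G)}$ in the product topology delivers a global satisfying assignment. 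I expect no deep obstacle, only one mildly fiddly step: the bookkeeping of self-loops in $\Rel(G)$ so that the degree parameter matches exactly the form of the cited symmetric LLL, which amounts to verifying $(1-1/\Delta)^{\Delta-1} > 1/\me$ when choosing the weights $x_v = 1/\Delta$ in the general asymmetric statement.
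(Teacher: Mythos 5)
The paper does not prove Theorem~\ref{blll} at all --- it is quoted from the literature (Spencer's paper) as the starting point for the whole discussion --- so there is no internal proof to compare against. Your reduction is the correct and standard one: the self-loop bookkeeping in $\Rel(G)$ does give dependency degree at most $\De-1$ among the \emph{other} events (and the degenerate case $\Var(x)=\emptyset$ forces $A_x=\emptyset$ via the hypothesis), the symmetric LLL with $\me p(\De-1+1)\le 1$ then applies, and the Tychonoff compactness step handles infinite $V(G)$ since each clause constrains only finitely many coordinates; the only slight imprecision is that for finite $F$ you should impose only the clauses $\mathbf R(x)$ with $x\in F$ (leaving the boundary vertices unconstrained) rather than literally inducing the colouring problem on $F\cup\bigcup_{x\in F}\Var(x)$, but that is what your definition of $C_F$ already does.
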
 
		
		\begin{myrem}
			Usually, LLL is stated for a ``satisfying assignment of variables'' instead of a satisfying colouring. Let us explain this alternative point of view and why it is equivalent to the formulation with satisfying colourings. 
			Let $W$ be a set of variables and let $C$ be a set of logical clauses, each of which depends on some of the variables in~$W$. The problem asks whether there exists an assignment $a\in b^W$ of values to the variables which makes all of the clauses true. The translation from such a ``satisfying assignment'' problem to a digraph colouring problem is by considering the bipartite digraph $G$ with $V(G):= C \sqcup W$, where $(x,y)\in C\times W$ is an edge if and only if  $y$ is a variable which appears in the clause $x$. For $x\in W$ the set $\Var(x)$ is empty and we set $\mathbf R(x):=b^\emptyset$, so that $\mathbf R(x)$ does not impose any restrictions on satisfying colourings. For $x\in C$ we let $\mathbf R(x)$ be the set of those assignments on $\Var(x)$ which make the clause $x$ satisfied. In the graph $\Rel(G)$ two clauses are adjacent if and only if they share a common variable, while all elements of $W$ are isolated vertices of $\Rel(G)$.
		\end{myrem}

		One of the key developments related to LLL is the Moser--Tardos Algorithm (MTA for short) studied by Moser and Tardos~\cite{MR2606086} (with a different version analysed earlier by  Moser~\cite{MR2780080}). The MTA is a randomised algorithm for \textit{finding} a satisfying colouring under the assumption that~\eqref{eq:blll} holds. Moser and Tardos~\cite{MR2606086} proved that if we restrict attention to a class of colouring problems where the difference between the sides in the inequality~\eqref{eq:blll} is at least a fixed constant $c>0$, then the MTA finds a satisfying assignment on a graph $G$ after using $O(|V(G)|)$ random bits in expectation.

		Moser and Tardos~\cite[Theorem 1.4]{MR2606086} also provided a deterministic polynomial time algorithm for instances of LLL when  the maximum degree of a dependency graph is uniformly bounded. This restriction was removed by Chandrasekaran, Goyal and Haeupler~\cite{ChandrasekaranGoyalHaeupler10,ChandrasekaranGoyalHaeupler13}. A running time is estimated in \cite[Theorem~5]{ChandrasekaranGoyalHaeupler13} to be, roughly speaking, $O(|V(G)|^{3+1/\eps})$ for some $\eps>0$ depending on the slacks in~\eqref{eq:blll}.

		\paragraph{Main result.} It is convenient to fix, for the rest of the article, a natural number $b>1$ which is the number of colours in our colouring problems. With this in mind, a \emph{colouring problem} is a pair $(G,\mathbf R)$, where $G$ is a digraph and $\mathbf R$ is a local rule on $G$. 
		
		The main aim of this article is to present a parallel version of the MTA and to show, in Theorem~\ref{thm-main} below, that it has the following property. Let $f\colon \NN\to \NN$ be a \emph{subexponential function}, i.e.~such that for every $\eps>0$ it holds that $\lim_{n\to\infty} \frac{f(n)}{(1+\eps)^n} =0$. Consider the class $\cal C=\cal C(f)$ of graphs in which any ball of a radius $r$ contains at most $f(r)$ vertices. For example, we could take $f(n):=(2n+1)^2$, in which case $\cal C$ would contain all subgraphs of the infinite $2$-dimensional grid. Let us also fix some $c>0$ and let $\cal P=\cal P(f,c)$ be the class of colouring problems $(G,\mathbf R)$ such that $G\in \cal C$ and 
		\begin{equation}\label{eq:gap}
			\max_{x\in V(G)} \left(1 - \frac{|\mathbf R(x)|}{b^{|\Var(x)|}} \right)\le \frac{1}{\me\De} - c.
		\end{equation} 
		Then there exists a constant $K>0$ such that for every colouring problem $(G,\mathbf R)\in \cal P$ the expected number of random bits which the algorithm uses is at most $K$ (where a \emph{bit} means an element of $b$). In particular, \textbf{the expected number of random bits used is independent from the number of vertices in $G$}. This is the crucial difference when compared to previous algorithmic versions of LLL. As a consequence, we obtain, in Corollary~\ref{cory-main}, a sequential \textbf{deterministic} algorithm which runs in time $O(|V(G)|)$ for colouring problems $(G,\mathbf R)$ in any class $\cal P$ of the kind we have just described.
		
		\paragraph{Connections to descriptive combinatorics and distributed algorithms.}
		Our results can also be used to derive a Borel version of the Lov\'asz Local Lemma. In brief, we say that a colouring problem $(G,\mathbf R)$ is \emph{Borel} if the vertex set $V(G)$ is a standard Borel space and both the edge set $E(G)\subset V(G)^2$ and the function $x\mapsto\mathbf R(x)$ are Borel. 
		In Theorem~\ref{thm-main-borel} we show that if $G$ has a uniformly subexponential growth and $(G,\mathbf R)$ is a Borel colouring problem for which the inequality~\eqref{eq:gap} holds, then there is a satisfying colouring $V(G)\to b$ which is a Borel function. This result appeared as the main result in the preprint~\cite[Theorem~4.7]{CsokaGrabowskiMathePikhurkoTyros:arxiv}, which is now superseded by this paper.

		Our Borel LLL is a result in 
		\emph{descriptive combinatorics}, an emerging field that studies, in particular, colourings of infinite graphs which are ``constructive'' in the sense of descriptive set theory. We refer the reader to an extensive survey of this field by Kechris and Marks~\cite{KechrisMarks20survey}. 
		
		Other descriptive versions of LLL were proved by Bernshteyn~\cite{Bernshteyn19am,Bernshteyn23a,Bernshteyn23i}, Bernshteyn and Weilacher~\cite{BernshteynWeilacher25} and Kun~\cite{Kun13arxiv}. For quick comparison with our Borel LLL, let us mention that the versions proved in \cite{Kun13arxiv,Bernshteyn19am,Bernshteyn23i} allow a null-set of errors, and the versions in~\cite{Bernshteyn23a,BernshteynWeilacher25}, while producing an error-free satisfying colouring which is respectively continuous and Borel, require a stronger condition than~\eqref{eq:gap}.\footnote{After the submission of this paper, Bernshteyn and Yu~\cite{BernshteynYu26} established (via a different method) a version of Borel LLL which is stronger than ours.}

		On the other hand, our version of LLL requires that the underlying graph has subexponential growth.  This might seem like a very strong assumption, especially since the other descriptive versions of LLL mentioned in the previous paragraph do not need any restriction of this kind. However, the assumption of subexponential growth cannot be removed in the Borel context in full generality (see Remark~\ref{rem:noborel}). Also, there is a considerable number of recent articles where graphs of subexponential growth are studied from the point of view of descriptive combinatorics, see e.g.~\cite{ConleyTamuz20,GrebikRozhon21a,GrebikRozhon23a,Thornton22}.

		Our Borel LLL from \cite{CsokaGrabowskiMathePikhurkoTyros:arxiv} 
		has already found some interesting applications. For example, it was crucially used by Bernshteyn and Yu~\cite{BernshteynYu25} to resolve some open problems on large-scale geometry of Borel graphs.  
		Also, Bernshteyn~\cite[Theorem 2.15]{Bernshteyn23i} used our lemma to show that, for graphs of subexponential growth, if a colouring problem on a graph $G$ can be solved by a randomised {\sf LOCAL}  algorithm (a model of distributed computing introduced by Linial~\cite{Linial92}) in $O(\log |V(G)|)$ rounds, then the corresponding Borel colouring problem admits a Borel solution.
		
		In fact, the Lov\'asz Local Lemma plays a very important role in the {\sf LOCAL} complexity landscape. 
		More specifically,  efficient distributed algorithms for solving LLL can be used for proving \emph{automatic speedup theorems} (i.e.~results that rule out whole intervals of possible complexities) for locally checkable labelling problems. In particular, if there exist distributed LLL algorithms matching the lower bound of Brandt et al~\cite{BFHKLRSU16} (as conjectured by Chang and Pettie~\cite[Conjecture~5.1]{ChangPettie19siamjc}), then a certain interval of complexity classes collapses to one point in the {\sf LOCAL} complexity landscape for general bounded-degree graphs. 
		We refer the reader to Chang and Pettie~\cite{ChangPettie19siamjc} for more detailed information.
		The best known upper complexity bounds for LLL (coming from the method of Fischer and Ghaffari~\cite{FischerGhaffari17} combined with the improved network decomposition algorithms by Ghaffari, Grunau and Rozho\v n~\cite{RozhonGhaffari20,GhaffariGrunauRozhon21}) are still polynomially far away from the conjectured values. The {\sf LOCAL} complexity of LLL is not fully understood also for the class of subexponential growth graphs. 
		It would be interesting to see if the ideas introduced in this paper could shed some light on this problem.

		\section{Algorithm and the results}\label{mtal}
		
		\subsection{Notation and conventions}
		We let  $\NN :=  \{0,1,2,\ldots\}$ and $\NN_+:=\NN\setminus \{0\}$. The cardinality of a set $X$ 
		is denoted by~$|X|$.   ``Either ... or ...'' is non-exclusive. The relation ``$\subset$'' allows equality.  We recall that if $n\in \NN$, then $n=\{0,1,\ldots,n-1\}$. 
		
		Given a set $X$, we denote by $\Pow(X)$  the power set of $X$, i.e.~the set of all subsets of~$X$. Given another set $Y$, we let $X^Y$ be the set of all functions from $Y$ to $X$.  In particular, we have the following notational clash: for $b,n\in \NN$, the symbol $b^n$ can either denote a number (and hence a set of numbers)  or the set of all functions from $\{0,1,\ldots, n-1\}$ to $\{0,1,\ldots, b-1\}$. We believe that resolving this ambiguity will never cause any difficulty for the reader. A convenient informal way of thinking about it is that if $k\in b^n$ then $k$ is both a number smaller than $b^n$ and a function from $n=\{0,\ldots, n-1\}$ to $b=\{0,\ldots, b-1\}$, and the translation between the two interpretations is that the $b$-ary expansion of the number $k$ can be thought of as a function from $\{0,\ldots, n-1\}$ to $\{0,\ldots, b-1\}$.
		
		Given a function $f$ with domain $D$ and a set $C\subset D$, we denote by $f \restriction C$ the restriction of $f$ to $C$.
		
		By a \emph{digraph} we mean a pair $G=(V,E)$ where $V$ is a non-empty set, and $E\subset V\times V$. In other words, our digraphs are without multiple edges, and each vertex is allowed to have at most one self-loop. A digraph $G=(V,E)$ is \emph{symmetric} if $E$ is a symmetric subset of $V\times V$. A reader who is not interested in the applications to Borel combinatorics in Section~\ref{sec-borel} may safely assume that $V$ is a finite set. In any case, each digraph that we consider has a finite maximal vertex degree. 
		
		A set $I\subset V(G)$ is \emph{independent} in $G$ if there are no edges in $G$, other than self-loops, which connect elements of $I$. 
		
		The \emph{graph distance} between vertices $x$ and $y$ is the minimal number of edges in a (not necessarily directed) path that connects $x$ and $y$. For $x\in V(G)$ and $R\in \NN$ we let  $N(x,R)$ be the \emph{ball of radius $R$ around $x$}, i.e.~the set of those $y\in V(G)$ whose graph distance to $x$ is at most $R$. 
		
		We use some non-standard notation for neighbourhoods of vertices. The motivation for it will become clear in the next subsection when we discuss local rules. For $x\in V(G)$ we define 
		\part  $\Var(x) := \{y\in V(G)\colon (x,y)\in E(G)\}$,  
		\part $\Cl(x) :=\{y\in V(G)\colon (y,x)\in E(G)\}$,
		\part $\deg(x) :=|\Var(x) \cup \Cl(x)|$, and 
		\part $\maxdeg(G) := \max_{x\in V(G)} \deg(x)$, \ \ $\maxoutdeg(G) :=\max_{x\in V(G)} |\Var(x)|$. 
		\trap
		
		If we need  to point out the dependence on $G$ then we use the notation $\Var_G(x)$, $\Cl_G(x)$ and $\deg_G(x)$. Let us stress again that we allow self-loops; in particular, $x$ may or may not be a member of either $\Var(x)$ or $\Cl(x)$.
		
		If $A$ is a set of vertices  then we let $\Var(A)  := \bigcup_{x\in A} \Var(x)$ and  $\Cl(A) := 
		\bigcup_{x\in A} \Cl(x)$.

		We define $\Rel(G)$ to be the digraph on the same set of vertices as $G$, and such that $(x,y)\in E(\Rel(G))$ if and only if $\Var_G(x)\cap \Var_G(y)\neq \emptyset$. Clearly, $\Rel(G)$ is a symmetric digraph and there is a self-loop at every $x$ such that $\Var_G(x) \neq \emptyset$.

		\subsection{Description of the algorithm}\label{mtalik}
		Our variant of the MTA depends on several choices. Some of them have very little influence on our analysis of the algorithm, but in this subsection, we take some time to spell out all the ingredients explicitly.
		
		\part We recall that we have fixed a natural number $b>1$, which is the number of colours in our colouring problems. A \emph{local rule} for a digraph $G$ is a function $\mathbf R$ whose domain is $V(G)$ and such that for every $x\in V(G)$ it holds that (i) $\mathbf R(x)\subset \bits^{\Var(x)}$, and (ii) if $\Var(x)=\emptyset$ then $|\mathbf R(x)|=1$, or equivalently $\mathbf R(x) = b^{\Var(x)}$. A \emph{colouring problem} is a pair $(G,\mathbf R)$, where $G$ is a digraph and $\mathbf R$ is a local rule on $G$. 
		
		Given a function $f\in b^{V(G)}$ and $x\in V(G)$, we let $\res_x (f) \in b^{\Var(x)}$ be the restriction of $f$ to $\Var(x)$. We say that $f$ \textit{satisfies} $\mathbf R$ if for every $x\in V(G)$ we have  $\res_x(f)\in \mathbf R(x)$.
		
		\begin{myrem}\begin{enumerate}[wide]
				\part Let us explain our choice of the notation $\Var(x)$ and $\Cl(x)$. First, a vertex $x\in V(G)$  has a colouring constraint attached to it, namely the set $\mathbf R(x)$. As such, we think of $\mathbf R(x)$ as a logical clause whose variables are the vertices in $\Var(x)$.  Second, it may be that $x\in \Var(y)$ for some $y\in V(G)$. In this case $\mathbf R(y)$ is a clause in which $x$ ``appears as a variable'', and this is the reason for the notation $\Cl(x)$: it is ``the set of all clauses which contain $x$ as a variable''.
				
				\part Condition (ii) in the definition of a local rule means that if $x$ is a clause without variables then $\mathbf R(x)$ does not impose any constraints on colourings. While Condition~(ii) can be omitted (namely, if it fails then Assumption~\eqref{eq-good} of Theorem~\ref{thm-main} does not hold and the theorem simply does not apply), it is convenient to have it.
			\end{enumerate}
		\end{myrem}
		Given $\mathbf R$, we let  $\mathbf R^c(x) :=\bits^{\Var(x)} \setminus \mathbf R(x)$. For $f\colon V(G)\to \bits$ we set 
		$$
		\B_{\mathbf R}(f):= \{x\in V(G)\colon \res_x(f) \in \mathbf R^c(x)\}.
		$$  
		We note that if $\Var(x)=\emptyset$ then for all $f$ we have $x\notin \B_{\mathbf R}(f)$. When $\mathbf R$ is clear from the context we write $\B(f)$ instead of $\B_{\mathbf R}(f)$. The notation $\B$ comes from ``bad set'', as $\B(f)$ is the ``bad set of $f$'', i.e.~the set of those clauses where $f$ does not fulfil $\mathbf R$.

		\part  An \textit{independence function} on a digraph $G =(V,E)$ is a function $\I=\I_G\colon \Pow(V) \to \Pow(V)$ with the property that for every $X\in \Pow(V)$ the set $\I(X)$ is a maximal subset of $X$ which is independent in $G$.
		
		To avoid clutter, we will write $\IB(f)$ instead of $\I_{\Rel(G)}(\B_{\mathbf R}(f))$ throughout the article. Thus $\IB(f)$ is a maximal collection of clauses violated by $f$ such that no two share a variable.
		
		\part A \textit{partition} of $V(G)$ is a finite set $\pi$ of pairwise disjoint subsets of $V(G)$ such that $V(G) = \bigcup_{W\in \pi} W$. We let $S_\pi\colon V(G) \to \pi$ be the  function defined by demanding that $x\in S_\pi(x)$ for all $x\in V(G)$. We say that a partition $\pi$  is \textit{$r$-sparse} if, for every $x\in V(G)$, different points of $N_G(x,r)$ belong to different elements of $\pi$.  Equivalently, $\pi$ is $r$-sparse if whenever $x\neq y$ and $x$ and $y$ are in the same part then the graph distance between $x$ and $y$ is greater than $2r$.
		\trap

		A \emph{Moser--Tardos tuple} is a tuple $(G,\mathbf R, \pi)$, where $G$ is a digraph, $\mathbf R$ is a local rule on $G$ and $\pi$ is a partition of $V(G)$. Our version of the MTA depends on a Moser--Tardos tuple $\cal M = (G, \mathbf R, \pi)$, an element $\rnd\in \bits^{\pi\times \NN}$ which can be thought of as the source of random bits, and an independence function on $\Rel(G)$. 
		
		\begin{myrem} Although the algorithm depends on the choice of the independence function $\I$ on $\Rel(G)$, we do not want to incorporate $\I$ in the notation, because the details of how independent sets are constructed on a particular graph are irrelevant for the analysis of the algorithm. As such we will just tacitly assume that we have a function $\I$ for every $(\cal M,\rnd)$; in fact it does not have to be a single function for the duration of the whole algorithm, it might also depend on the time, which will be useful in the proof of Corollary~\ref{cory-main} below.
		\end{myrem}
		
		The outcome of the MTA is a sequence of functions $\MT^i = \MT^i_{\cal M, \rnd}\in b^{V(G)}$ defined as follows.
		We let  $\MT^0(x) := \rnd(S_\pi(x),0)$ for all $x\in V(G)$, and we proceed to define the functions $\MT^i$ inductively. 
		
		Informally, if $\MT^i$ is already defined, then we take a maximal independent set of clauses where $\MT^i$ does not satisfy $\mathbf R$ (i.e.~a maximal subset of $\B_{\mathbf R}(\MT^i)$ which is independent in $\Rel(G)$), and we use $\rnd$ to assign new values to the variables which are in those clauses. In other words, we resample the values for variables in $\Var(\IB(\MT^i))$.
		
		Let us write the formal definitions now. We will also need a sequence of auxiliary functions $h^j= h^j_{\cal M,\rnd}\colon V(G) \to \NN$ counting the number of resamplings which the algorithm made at a given variable when defining $\MT^0,\MT^1,\ldots,\MT^{j-1}$.  We let $h^0(x):=0$ and $h^1(x):=1$ for all $x\in V(G)$. 
		
		Now suppose that for some $j \in \NN$ the functions $\MT^j$ and $h^{j+1}$ are defined. For $x\in V(G)$ we let
		\begin{equation}\label{eq-def-algorithm}
			\MT^{j+1}(x) := 	\left\{\begin{array}{l l}
				\rnd(S_\pi(x) ,h^{j+1}(x))& \quad \mbox{if $x\in \Var(\IB(\MT^j))$,}\\
				\MT^{j}(x)& \quad \mbox{otherwise,}\\ 
			\end{array} \right. 
		\end{equation}
		and 
		$$
		h^{j+2}(x) := 	\left\{\begin{array}{l l}
			h^{j+1}(x) +1& \quad \mbox{if $x\in \Var(\IB(\MT^{j}))$,}\\
			h^{j+1}(x) & \quad \mbox{otherwise. }\\ 
		\end{array} \right.
		$$
		
		This finishes the description of the algorithm. Note that for every $j\in \NN$ and $x\in V(G)$ it holds that $\MT^j(x)=\rnd(S_\pi(x),h^{j+1}(x)-1)$.
		
		Let us define $h^\infty =h^\infty_{\cal M,\rnd}\colon V(G) \to \NN\cup\{\infty\}$ by setting  $h^\infty(x) := \lim_{k\to\infty} h^k(x)$.  For  $\rnd\in b^{\pi\times \NN}$, we let $\rnd_l\in b^{\pi\times l}$ be the restriction of $\rnd$ to the set $\pi\times l = \pi \times \{0,\ldots, l-1\}$. 
		
		\begin{myrem}
			\part[wide,label=(\alph*)] The algorithm described by~\eqref{eq-def-algorithm} takes as its input a pair $(\cal M, \rnd)$, where $\cal M=(G,\mathbf R, \pi)$ is a Moser--Tardos tuple and $\rnd\in b^{\pi\times \NN}$.  Frequently it will be convenient to think of  $\cal M$ as being fixed and regard the algorithm as depending only on $\rnd$. 
			
			\part As described, the algorithm never terminates. This is because it is convenient in the proofs to have the functions $\MT^i$ defined for all $i\in\NN$.  However, by convention we say that our algorithm \emph{succeeds} after $i$ steps if $\B(\MT^{i-1})=\emptyset$. Note that if this is the case then $\MT^j=\MT^{i-1}$ for all $j\ge i-1$, and $\MT^{i-1}$ satisfies $\mathbf R$. 
			
			It will sometimes be convenient to only consider the first $k$ steps of the algorithm, i.e.~the algorithm described by~\eqref{eq-def-algorithm} terminated after producing $\MT^{k-1}$. This $k$-step MTA depends only on $\rnd_k\in b^{\pi\times k}$. 
			
			\part We note that $h^i(x)$ is defined so that the initial setting $\MT^0(x)=\rnd(S_\pi(x),0)$ is counted as a resampling of every variable. By definition, $h^\infty(x)$ is the total number of resamplings that the algorithm makes at the variable $x$.
			
			\part If for some $\rnd \in b^{\pi \times \NN}$ we have $l:= \sup_{x\in V(G)} h^{\infty}_{\cal M,\rnd}(x) <\infty$, then the algorithm succeeds for this instance of $\rnd$, and furthermore the algorithm depends only on $\rnd_l\in b^{\pi \times l}$.  
			
			\trap
		\end{myrem}

		\subsection{Statement of the main result}
		For $\nn,d\in \NN_+$ and $\eps>0$ let $\subexp(\nn,\eps,d)$ be the class 
		of all digraphs $G$ with $\maxdeg(G)\le d$, and such that for all $ x\in V(G)$ we have 
		$$
		|N_G(x,3\nn)|\le (1+\eps)^{\nn}.
		$$
		
		Our main result is the following theorem, where (as everywhere else in the paper) we define $\De := \maxdeg(\Rel(G))$, with this also taking into account the loops of $\Rel(G)$, and assume that $\De>0$ as otherwise there are no restrictions on a satisfying assignment. Also, let $\Pr_{\rnd}$ be the probability measure where we take uniform random $\rnd\in b^{\pi\times\NN}$.
		
		\begin{theorem}\label{thm-main}
			Let $\cal M=(G,\mathbf R, \pi)$ be a Moser--Tardos tuple, let $\De := \maxdeg(\Rel(G))>0$, and let integers $\nn,d\in \NN_+$ and  reals $\eps,\delta>0$  be such that 
			\part $G\in \subexp(\nn,\eps,d)$,
			\part the partition $\pi$ is $3\nn$-sparse, and
			\part it holds that
			\beq[eq-good]
			\be_{\cal M} \le \frac{1}{(\me\De)^{1+\delta}\, b^{\eps d}},
			\eeq
			where we define
			\begin{equation}\label{eq:beta}
				\be_{\cal M} = \be_{(G,\mathbf R)} := \max_{x\in V(G)} \frac{|\mathbf R^c(x)|}{|b^{\Var(x)}|}.
			\end{equation}
			\trap
			Then there is a constant $K$ which depends only on  $b$, $\de$, $d$ and $|\pi|$, such that for all $ m\in \NN_+$ we have that 
			\begin{equation}\label{eq-wqeq1}
				\Pr_{\rnd}\left(\sup_{x\in V(G)} h^\infty_{\cal M,\rnd}(x)> m\right) \le \frac{K(m+1)^{|\pi|}} {(\me\De)^{\de m}}.
			\end{equation}%
		\end{theorem}
		
		The proof is presented in Section~\ref{sec-analysis}. 
		
		\begin{myrem}\label{rem-after-main}
			
			\part Since $|\Var(x)|\le d$ is uniformly bounded for $x\in V(G)$, we can take maximum in the left-hand side of~\eqref{eq:beta} (rather than supremum).

			\part Let us comment on the cardinality $|\pi|$, i.e.~the number of parts in the partition~$\pi$. Let $H$ be the graph on the same set of vertices as $G$, where we connect two distinct vertices if they are at a distance at most $6\nn$ (with respect to the graph distance in~$G$). Thus $H$ has maximal vertex degree bounded by $1+d\,\sum_{i=1}^{6\nn}(d-1)^{i-1}$ which is at most, say,~$d^{6\nn}+1$. Clearly, a partition is $3\nn$-sparse if and only if it induces a proper vertex colouring of~$H$. Thus a simple greedy colouring procedure shows that there is a $3\nn$-sparse partition $\pi$ with $|\pi|\le d^{6\nn}+2$.
			
			\part\label{pa:c}  If $G$ is a finite digraph then the bound~\eqref{eq-wqeq1} implies that if $m\in \NN_+$ is such that 
			\begin{equation}\label{eq:m}
				\frac{K(m+1)^{|\pi|}} {(\me\De)^{\de m}}< 1,
			\end{equation}
			then there exists $\rnd$ such that the algorithm's run on $(\cal M,\rnd)$ finds a satisfying colouring and accesses only the elements of~$\rnd_m$. 
			
			\trap 
		\end{myrem}

		Remark~\ref{rem-after-main}\,\ref{pa:c} leads to a sequential deterministic MTA which 
		runs in time $O(|V(G)|)$ for many interesting classes of vertex colouring problems.
		To be precise, let us define the following class of colouring problems (we recall that we have fixed once and for all the number of colours $b$ for the local rules). For $\nn,d\in \NN_+$ and $\de >0$, let $\colsubexp(\nn,d,\de)$ be the class of 
		colouring problems $(G,\mathbf R)$, where $G$ is a finite digraph, and for which there is $\eps>0$ such that
 $G\in \subexp(\nn,\eps,d)$ and
		$$
		\be_{(G,\mathbf R)}\le \frac{1}{(\me\De)^{1+\delta}\, b^{\eps d}}.
		$$
		
		\trap
		\begin{cory}\label{cory-main}
			For every $(\nn,d,\de)$ there is a sequential deterministic algorithm that takes as an input a colouring problem $(G,\mathbf R)\in \colsubexp(\nn,d,\de)$, and finds a satisfying assignment in time $O(|V(G)|)$ (where the time complexity is with respect to the standard random-access machine computational model, see e.g.~\cite[Section 2.6]{books/daglib/0072413}).
		\end{cory}

		\begin{proof}
			Let us describe the algorithm. Let $(G,\mathbf R)$ be its input. 
			We start by finding a $3\nn$-sparse partition of $V(G)$ with $|\pi|\le d^{6\nn}+2$. This can be done in linear time by first computing the undirected graph power $G^{6\nn}$ and applying the greedy algorithm for finding a vertex colouring in $G^{6\nn}$. Let $m$ be a sufficiently large constant to satisfy $K(m+1)^{|\pi|}< \me^{\de m}$. Thus $m$ depends only on $b,\nn, d$ and $\de$, and satisfies~\eqref{eq:m} since $\De\ge 1$. 
			
			Let $\cal M: = (G, \mathbf R, \pi)$ and let $\eps>0$ witness $(G, \mathbf R)\in \colsubexp(\nn,d,\de)$. By Theorem~\ref{thm-main},  there exists $\rnd\in  b^{\pi\times \NN}$ such that the algorithm's run on $(\cal M, \rnd)$ succeeds and accesses only $\rnd_m\in b^{\pi\times m}$. 
			
			We have the following deterministic algorithm. The first ``external loop'' runs over all (constantly many) choices of $\rnd_m\in b^{\pi\times m}$. For each such $\rnd$, the ``internal loop'' runs our algorithm on $(\cal M,\rnd_m)$ so that each pass of the internal loop does a single step of the MTA algorithm. We run the internal loop
			until one of the two things happens: either  $\B(\MT^i)$ is empty at some moment, in which case we halt the whole algorithm and output $\MT^i$, or the algorithm needs to access $\rnd( \ast, m)$, that is, the $(m+1)$-st bit of some vertex, in which case we move to the next $\rnd_m\in b^{\pi\times m}$. Note the internal loop resamples at most $|V(G)|\cdot m$ variables and thus stops after at most $|V(G)|\cdot m$ passes.  
			
			Let us describe in more detail how  a single pass of the internal loop can be implemented.    
			
			Before the internal loop starts we define two lists: Currently-Violated and Potentially-Violated.  Currently-Violated is the list of all clauses which are violated by $\MT^0$; to define Potentially-Violated we start with Currently-Violated and we append all neighbours in $\Rel(G)$ of the elements in Currently-Violated. The role of Potentially-Violated is that if a clause will be violated by $\MT^1$ then it is necessarily in Potentially-Violated.  The lists Currently-Violated and Potentially-Violated will be updated at the end of each pass of the internal loop. 
			
			For each pass of the internal loop, we go through the clauses in Currently-Violated, and for each clause $C$ we check if we have already encountered, in the current pass, a resampled clause $D$ which is a neighbour of $C$ in $\Rel(G)$. If this is the case then we move to the next clause in Currently-Violated, and otherwise, we resample the variables of $C$ using $\rnd$. 
			
			At the end of the pass, we define new lists Currently-Violated and Potentially-Violated as follows: first, we let new Currently-Violated consist of the entries of old Potentially-Violated which are violated, and then we let new Potentially-Violated consist of the new list Currently-Violated and all the neighbours in $\Rel(G)$ of the elements of the new list Currently-Violated. 
			
			We terminate the internal loop if 1) at any point $\rnd(\ast, m)$ needs to be accessed, and then we move to the next $\rnd$ in the external loop, or 2) after some pass the list Currently-Violated is empty (which means that we found a satisfying assignment).
			
			This finishes the description of the sequential deterministic algorithm. Let us argue that the internal loop indeed implements a single step of the parallel deterministic algorithm, that is, we resample variables corresponding to a maximal independent set of violated clauses. Since we resample each inspected clause if and only if it is violated and has no previously resampled neighbours in $\Rel(G)$, it remains to argue that every violated clause was inspected. 
			For this we note that for every $i\in \NN$ we have that before the $i$-th pass of the internal loop commences, the list Potentially-Violated contains all the clauses that are violated, together with all clauses that might be violated after the $i$-th pass finishes. Thus before the $i$-th pass commences, the list Currently-Violated contains all the violated clauses.

			Let us show that
			the presented sequential deterministic algorithm runs in linear time  in~$|V(G)|$. Of course, the initial computation of the lists Currently-Violated and Poten\-tially-Violated from $\MT^0$ takes linear time. Also, in order to update these two lists during the $i$-th pass (when we compute and resample a maximal independent subset of $\MT^{i-1}$) for $i=1,2,\ldots$\,, the algorithm needs to check only those clauses that share a variable with at least one clause in  $\B(\MT^{i-1})$, which is exactly the list Potentially-Violated at the beginning of the pass.
			Thus the total number of clause re-evaluations in our sequential algorithm (after $\MT^0$ and $\B(\MT^0)$ have been computed) is at most 
			$$
			d^2\sum_{i=1}^\infty |\B(\MT^{i-1})|\le d^4\sum_{i=1}^\infty |\IB(\MT^{i-1})|\le d^4m\,|V(G)|,
			$$ 
			where the second inequality uses the fact that the internal loop resamples at most $m\,|V(G)|$ variables. Thus the sequential algorithm indeed runs in linear time.\end{proof}

		\section{Analysis of the algorithm}\label{sec-analysis}
		
		Let $\cal M= (G,\mathbf R, \pi)$ be a Moser--Tardos tuple. 
		
		\begin{myrem} We will first give the proof of Theorem~\ref{thm-main} when $G$ is a finite digraph and $\pi$ consists of singletons of all vertices of $G$. This will be done in Subsection~\ref{subsec-interludium}.  When $\pi$ consists of singletons of all vertices of $G$ then our variant of MTA described in Subsection~\ref{mtalik} is essentially the same as the original Moser--Tardos algorithm.
			
			As such, the analysis of the algorithm is very similar to the analysis of other versions of the MTA that exist in the literature: we associate to each $(\cal M,\rnd)$  a combinatorial gadget $\cal L_{\cal M,\rnd}$ which we call a landscape, which essentially consists of a forest with decorations. The most important feature of $\cal L_{\cal M, \rnd}$ is that we can recover the part of $\rnd$ which was utilised in the run of the algorithm on $(\cal M, \rnd)$. This, together with bounds on the number of landscapes of a given size, will lead to the desired bound.
			
			The main reason to prove Theorem~\ref{thm-main} first under the extra assumptions is to explain what needs to be modified for the general case of Theorem~\ref{thm-main}. This explanation is presented in Subsection~\ref{se:ideas} and we hope that it motivates well the remaining constructions which involve landscapes, which are presented in Subsections~\ref{subsec-restrictions} and~\ref{subsec-equiv}. The proof of the general case of Theorem~\ref{thm-main} will be presented in Subsection~\ref{subsec-mainproof}.
		\end{myrem}
		
		Let $b^{\oplus \NN}$ be the set of all finite sequences consisting of elements of $\{0,\dots,b-1\}$, where we index each sequence with an initial segment of $\NN$. If $s=(s_0,\ldots, 
		s_{k-1})\in b^{\oplus \NN}$ then its \emph{length} is $\len(s) :=k$. For a set $X$ and  
		$f\colon X \to b^{\oplus \NN}$ we let $\len(f):= \sum_{x\in X} 
		\len(f(x))\in\NN\cup\{\infty\}$. More generally, if $Y\subset X$ then we let $\len_Y(f) := \sum_{y\in Y} \len(f(y))$.   
		We say that a function $g\colon X\to b^{\oplus \NN}$  \emph{$k$-complements} $f$ on $Y\subset X$ if for every $x\in Y$ we have  that $\len(f(x))+\len(g(x))  =k$. 
		
		\begin{myrem}\label{rem-lengths}
			We will use  the following simple observation several times: if we fix non-negative numbers $\len(f(x))$ for $x\in X$ with $\len(f)$ finite, then there are exactly $b^{\len(f)}$ possibilities for $f$. 
		\end{myrem}
		
		Let $k\in \NN_+$ and let $\rnd\in b^{\pi\times k}$. We will now define 
		two functions, $\used^k_{\cal M,\rnd}$  and $\unused^k_{\cal M, \rnd}$ 
		which keep track of the bits of $\rnd_k$  used when defining 
		$\MT^0,\ldots, \MT^{k-1}$ and the remaining bits respectively. Both functions are defined on $V(G)$ and have 
		values in~$b^{\oplus \NN}$. For $x\in V(G)$ we define 
		$\used^k_{\cal M,\rnd}(x)$ to be the sequence 
		$$
		\rnd(S_\pi(x),0),\ldots, \rnd(S_\pi(x),h^{k}(x)-1).
		$$
		Also, we define $\unused^k_{\cal M,\rnd}(x)$ to be the sequence 
		$$
		\rnd(S_\pi(x), h^{k}(x)),\ldots, \rnd(S_\pi(x),k-1).
		$$
		In particular, if $h^{k}(x)=k$, the maximum possible value, then $\unused^k_{\cal M,\rnd}(x)$ is the empty sequence. 
		Note that $\rnd_k(S_\pi(x),\ast)$ is the concatenation of $\used^k_{\cal M,\rnd}(x)$ and $\unused^k_{\cal M,\rnd}(x)$.
		
		\subsection{Landscapes}\mbox{}

		\begin{myrem} A landscape is a combinatorial gadget which arises from $(\cal M,\rnd)$,   roughly  speaking, as follows. 
			Recall that the algorithm at every step takes a maximal independent set $S$ of 
			clauses that are violated, and resamples all variables which appear in 
			these clauses. After this, every invalid clause shares variables with some clause in~$S$.
			
			This leads to a structure of a directed graph: vertices are the pairs $(C,i)$, where $i$ is a step, and 
			$C$ is a clause that is invalid after step $i$; and edges 
			$(C,i)\to (D,i+1)$ correspond to the ``causal relation'' which could 
			be very informally thought of as ``$C$ was invalid after step $i$; we resampled $C$ in step $i+1$ and this made $D$ invalid after step $i+1$''.
			
			We trim this directed graph to a forest and then add some decorations: (a) each variable remembers its final assignment, (b) each vertex $(C,i)$ remembers the values of the variables of $C$ in the colouring $\MT^i$. 
			
			The main point of associating a landscape to $(\cal M,\rnd)$ is that it is possible to recover the function $\used^k_{\cal M,\rnd}$ from the landscape (or, more precisely, from the finalised landscape).
		\end{myrem}
		
		Let us now proceed with precise definitions. Let $G$ be a digraph. We define a digraph $\Canvas(G)$ as follows. The set of vertices of $\Canvas(G)$ is $V(G)\times \N$. For every edge $(x,y)$ in $\Rel (G)$  and every $i\in \NN$ we add an edge $((x,i), (y,i+1))$ to $\Canvas(G)$. Note that, since the digraph $\Rel (G)$ is symmetric, the edge $((y,i), (x,i+1))$ is also added  to $\Canvas(G)$.    
		If $(x,i)\in V(G)\times \NN$  then we refer to $i$ as the \emph{level} of $(x,i)$. 
		
		A \emph{$G$-forest} is a subdigraph $\cal F$ of $\Canvas(G)$ such that each vertex of $\cal F$ has in-degree equal to either $0$ or $1$.  We note that $\cal F$ is a forest, and we let $\Trees(\cal F)$ be the set of connected components of $\cal F$. For $\tau \in \Trees(\cal F)$ we let $\rho(\tau) \in V(\tau)$ be the \emph{root}, i.e.~the unique vertex with minimal level, and we define $\ell(\tau)\in \NN$ to be the level of $\rho(\tau)$.  If for all $\tau\in \Trees(\cal F)$ we have $\ell(\tau)=0$ then we say that $\cal F$ is \emph{grounded}.
		The \emph{height} of $\cal F$ is the minimal $j\in \NN\cup\{\infty\}$ such that for all $(x,i)\in V(\cal F)$  we have $i< j$. In particular,  the unique $G$-forest with no vertices has height $0$. 
		We say that $\cal F$ is \emph{independent} if for all $i\in \NN$ the set $\{x\in V(G)\colon (x,i)\in V(\cal F)\}$ is an independent set in $\Rel(G)$. 
		
		Let $\cal M=(G, \mathbf R, \pi)$ be a Moser--Tardos tuple. An \emph{$\cal M$-landscape} is a pair $\cal L =(\cal F,\prev)$, where $\cal F$ is an independent $G$-forest,  and $\prev$ is a function with domain $V(\cal F)$ such that  for all $(x,i) \in V(\cal F)$ we have $\prev(x,i)\in \mathbf R^c(x)$.  
		
		\begin{myrem} Let us informally motivate the choice of the name $\prev$. It is a shorthand for ``Violation''. We will shortly see that, in the landscape associated to a run of the algorithm, we have that  $\prev(x,i)$ encodes the way in which the clause $x$ is violated in~$\MT^i$.
		\end{myrem} 
		
		A \emph{finalised $\cal M$-landscape} is a tuple $(\cal F,\prev, \final)$ where $(\cal F, \prev)$ is an $\cal M$-landscape of finite height, and $\final\in b^{V(G)}$.
		
		We say that a landscape $\cal L=(\cal F,\prev)$ or a finalised landscape $\cal L=(\cal F,\prev,\final)$ is \emph{grounded}, or of \emph{finite height}, if $\cal F$ has the respective property. Furthermore we define $V(\cal L) := V(\cal F)$ and $\Trees(\cal L) :=\Trees(\cal F)$.

		\subsubsection{Landscapes associated to a run of the MTA}
		
		Let  $\cal M = (G, \mathbf R,  \pi)$ be a Moser--Tardos tuple, and let $\rnd\in b^{\pi\times \NN}$. Let $<$ be a total order on $V(G)$. We now proceed to define an  $\cal M$-landscape associated to the run of the algorithm on $(\cal M,\rnd, <)$. The order plays a very minor role in the definition of this landscape and the subsequent analysis, so we will be slightly imprecise and denote it by $\cal L_{\cal M,\rnd}$, i.e.~without incorporating the order into the notation.
		
		We start with defining an independent $G$-forest $\cal F = \cal F_{\cal M,\rnd}$. First, we let 
		$$
		V(\cal F): = \bigcup_{i\in \NN} \left(\IB(\MT^i)\times\{i\}\right).
		$$
		Let us now describe the edges in $\cal F$. First, we need the following lemma.
		\begin{lemma}\label{lem-13} If $(x,i+1)\in V(\cal F)$ with $i\ge 0$ then   
			there exists $(y,i)\in V(\cal F)$ such that $\Var_G(x)\cap \Var_G(y)\neq \emptyset$ (where we do not demand that $x\neq y$). 
		\end{lemma}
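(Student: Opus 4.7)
\textbf{Proof plan for Lemma~\ref{lem-13}.} The plan is to do a case analysis according to whether the clause $x$ was already in the bad set at step $i$ or became bad only after the resampling. Since $(x,i+1)\in V(\cal F)$ means $x\in \IB(\MT^{i+1})\subseteq \B(\MT^{i+1})$, we have in particular $\Var_G(x)\neq\emptyset$ and $\res_x(\MT^{i+1})\in\mathbf R^c(x)$.

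First I would consider the case $x\in \B(\MT^i)$. Here, if $x\in \IB(\MT^i)$ already, then taking $y:=x$ works immediately, since $\Var_G(x)\cap\Var_G(x)=\Var_G(x)\neq\emptyset$. Otherwise, $x$ lies in $\B(\MT^i)\setminus \IB(\MT^i)$, and since $\IB(\MT^i)=\I_{\Rel(G)}(\B(\MT^i))$ is by definition a \emph{maximal} independent subset of $\B(\MT^i)$ in $\Rel(G)$, maximality forces the existence of some $y\in \IB(\MT^i)$ with $(x,y)\in E(\Rel(G))$. By the definition of $\Rel(G)$, this last condition is exactly $\Var_G(x)\cap\Var_G(y)\neq\emptyset$.

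Next I would handle the case $x\notin \B(\MT^i)$, i.e.~$\res_x(\MT^i)\in \mathbf R(x)$. Comparing with $\res_x(\MT^{i+1})\in \mathbf R^c(x)$, the restrictions must differ, so the resampling defining $\MT^{i+1}$ must have changed the value of at least one variable $v\in \Var_G(x)$. According to the algorithm's update rule~\eqref{eq-def-algorithm}, a variable is resampled between steps $i$ and $i+1$ exactly when it lies in $\Var(\IB(\MT^i))$. Hence there exists $y\in \IB(\MT^i)$ with $v\in \Var_G(y)$, and then $v\in \Var_G(x)\cap \Var_G(y)$ provides the required intersection.

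There is no genuine obstacle here: once one remembers that $\IB$ is defined as a \emph{maximal} independent subset of the bad set (so unchosen bad clauses always have a chosen bad neighbour), and that the only way a previously-good clause can go bad is by having a variable resampled, both cases close themselves. The only small subtlety worth stating explicitly in the write-up is that self-loops in $\Rel(G)$ are permitted, which is what legitimises taking $y=x$ in the subcase above.
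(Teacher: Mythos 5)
Your proof is correct and uses exactly the same two ingredients as the paper's (maximality of $\IB(\MT^i)$ inside $\B(\MT^i)$, and the fact that a clause's restriction can only change if one of its variables lies in $\Var(\IB(\MT^i))$); the paper merely packages the same case analysis as a proof by contradiction. No issues.
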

		\begin{proof}
			Let us  assume, by way of contradiction, that there is no $(y,i)\in V(\cal F)$ such that $\Var(x)\cap \Var(y) \neq \emptyset$. In particular,  since $\Var(x) \neq \emptyset$, we have that $x\notin \IB(\MT^i)$.
			
			It follows that no variables of the clause $x$ were resampled when passing from $\MT^i$ to~$\MT^{i+1}$. Hence, $x\in \B(\MT^i)$. But $\IB(\MT^i)\not\ni x$ is a maximal independent subset of $\B(\MT^i)\ni x$, so $\IB(\MT^i)$ must include some $y\in N_{\Rel(G)}(x,1)$, a contradiction.
		\end{proof}
		
		Now, for every $(x,i+1)\in V(\cal F)$ with $i\ge 0$ we take the minimal element $y$ of 
		the set $N_{\Rel(G)}(x,1)\cap \IB(\MT^i)$ (which is non-empty by the previous lemma and finite by $\Delta<\infty$), and add the edge $(y,i)\to 
		(x,i+1)$ to~$\cal F$. This finishes the definition of $\cal F = \cal 
		F_{\cal M,\rnd}$.
		
		Now  let $\prev_{\cal M,\rnd}(x,i)$ for $(x,i)\in V(\cal 
		F_{\cal M,\rnd})$ be the function $\MT^i$ restricted to $\Var(x)$. This finishes the construction of the $\cal M$-landscape $\cal L_{\cal M,\rnd} = (\cal F_{\cal M,\rnd}, \prev_{\cal M,\rnd})$. Lemma \ref{lem-13} implies that this landscape is grounded.
		
		We also define, for every $k\in\NN_+$, a finalised $\cal M$-landscape  
		\begin{equation}\label{eq-fin-landscape}
			\cal L^k_{\cal M,\rnd} :=(\cal F^{k-1}_{\cal M,\rnd},\prev_{\cal M, \rnd}\restriction V(\cal F^{k-1}_{\cal M,\rnd}), \MT^{k-1}),
		\end{equation}
		where $\cal F^j_{\cal M,\rnd}$ for $j\in\NN$ is the digraph induced by $\cal F_{\cal M,\rnd}$ on the set 
		$$
		\bigcup_{i < j} \left(\IB(\MT^i)\times\{i\}\right).
		$$
		In particular $\cal F^0_{\cal M,\rnd}$ is the empty digraph, and thus the only interesting data in $\cal L^1_{\cal M, \rnd}$ is the assignment $\MT^0$.

		\subsubsection{The sequence encoded by a finalised landscape}
		
		Let  $\cal M=(G, \mathbf R,\pi)$ be a Moser--Tardos tuple and let $\cal L =(\cal F,\prev, \final)$ be a finalised $\cal M$-landscape of finite height $k$. We  define the function $\used_{\cal L}\colon V(G)\to b^{\oplus \NN}$ as follows. For $x\in V(G)$, we consider two cases. 
		
		If there does not exist $(y,i)\in V(\cal F)$ with $x\in \Var(y)$ then we let $\used_{\cal L}(x) := (\final(x))$, i.e.~$\used_{\cal L}(x)$ is a sequence of length $1$.  
		
		If there is $ (y,i)\in V(\cal F)$ with $x\in \Var(y)$, we start by listing all vertices $(y_1,s_1)$, $(y_2,s_2)$,$\ldots$, $(y_t,s_t)\in V(\cal F)$, such that $x\in \Var(y_j)$. Since the forest $\cal F$ is independent, each natural number appears at most once as the second coordinate in this sequence, so we may assume that $s_1<s_2<\ldots<s_t$. Now we define $\used_{\cal L}(x)$ as the sequence 
		\[
		\used_{\cal L}(x):=(\prev(y_1,s_1)(x), \prev(y_2,s_2)(x),\ldots, \prev(y_t,s_t)(x), \final(x)).
		\] 
		
		The following lemma follows directly from the definitions of $\used_{\cal L_{\cal M,\rnd}^k}$ and $\used_{\cal M,\rnd}^k$. Its meaning is that the random bits that are used in the first $k$ steps of the algorithm can be recovered from the finalised landscape $\cal L_{\cal M,\rnd}^k$.
		
		\begin{lemma}\label{lem-equal-used}
			Let $\cal M =(G, \mathbf R,  \pi)$ be a Moser--Tardos tuple. For all $k\in\NN_+$ and all $\rnd\in b^{\pi\times \NN}$ we have 
			$$
			\used_{\cal L^k_{\cal M,\rnd}} = \used_{\cal M,\rnd}^k \, ,
			$$
			where $\cal L^k_{\cal M,\rnd}$ is defined by~\eqref{eq-fin-landscape}.\qed
		\end{lemma}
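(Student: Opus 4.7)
The lemma is essentially a direct verification that, for each vertex $x$, the sequence of bits of $\rnd(S_\pi(x),\cdot)$ extracted from the finalised landscape agrees with those consumed by the algorithm through step $k-1$. My plan is to fix $x\in V(G)$ and compare the two sequences entry by entry.

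First, I would identify the vertices $(y,s)\in V(\cal F^k_{\cal M,\rnd})$ with $x\in\Var(y)$. Because $\cal F_{\cal M,\rnd}$ is independent at each level, such vertices have pairwise distinct second coordinates $s_1<s_2<\ldots<s_t$, and these are exactly the levels $s<k$ at which $x\in\Var(\IB(\MT^s))$, equivalently the steps at which the algorithm resamples $x$ in its first $k$ rounds. If no such vertex exists ($t=0$), then $\MT^j(x)=\rnd(S_\pi(x),0)$ never changes with $j$, we have $h^k(x)=1$, and both sides reduce to the singleton $(\rnd(S_\pi(x),0))$, matching the first case of the definition of $\used_{\cal L}$.

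For the main case $t\ge 1$, I would prove by induction on $i$ that $\MT^{s_i}(x)=\rnd(S_\pi(x),i-1)$. The point is that the counter $h^{s_i+1}(x)$ equals $i$: it has been incremented from its initial value $1$ exactly $i-1$ times, once for each earlier resampling step $s_1,\ldots,s_{i-1}$, and these are the only increments because the $s_j$'s are the complete list of times at which $x\in \Var(\IB(\MT^{s_j}))$ occurs. Thus $\prev(y_i,s_i)(x)=\MT^{s_i}(x)=\rnd(S_\pi(x),i-1)$. Tracking the value of $\MT^j(x)$ between consecutive resampling steps by the same inductive argument, the final entry $\final(x)=\MT^k(x)$ is identified with the bit $\rnd(S_\pi(x),h^k(x)-1)$. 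Substituting these identifications into the landscape's defining formula $(\prev(y_1,s_1)(x),\ldots,\prev(y_t,s_t)(x),\final(x))$ recovers exactly the prefix of $\rnd(S_\pi(x),\cdot)$ of length $h^k(x)$ defining $\used^k_{\cal M,\rnd}(x)$.

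The main obstacle is keeping the indexing consistent between the counter $h^k$, the forest levels, and the finalised assignment; in particular, carefully accounting for whether the last resampling of $x$ strictly precedes step $k$ or occurs at step $k$, since this influences whether the final landscape entry coincides with $\MT^{s_t}(x)$ or with a fresh bit. Once this bookkeeping is settled, the proof is a routine match-up of terms and the lemma follows with no further work.
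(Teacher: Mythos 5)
Your verification is correct and matches the paper's intent exactly: the paper states this lemma with no proof, asserting it follows directly from the two definitions, and your induction (the counter identity $h^{s_i+1}(x)=i$, hence $\prev(y_i,s_i)(x)=\MT^{s_i}(x)=\rnd(S_\pi(x),i-1)$, with the last entry supplied by $\final(x)=\MT^k(x)$) is precisely that direct check. The boundary issue you flag is real but lies in the paper's definitions rather than in your argument: as literally written, a vertex at level $k-1$ of $\cal F^k_{\cal M,\rnd}$ records a resampling used in defining $\MT^k$, which $\used^k_{\cal M,\rnd}$ (tracking only the bits consumed for $\MT^0,\ldots,\MT^{k-1}$) does not count, so the stated equality holds verbatim only after the harmless reindexing you describe.
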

		
		For any $G$-forest $\cal F$, let us define 
		$$
		\varcount(\cal F) :=  |V(G)|+ \sum_{(y,i)\in V(\cal F)} |\Var(y)| \in \NN\cup \{\infty\},
		$$
		and let $\varcount(\cal L):=\varcount(\cal F)$ for a finalised landscape $\cal L=(\cal F,\prev,\final)$. The meaning of the quantity $\varcount(\cal L)$, when $\cal L=\cal L^k_{\cal M,\rnd}$, is that it is equal to the total number of variable assignments which the algorithm's run on $(\cal M,\rnd)$ resamples when defining $\MT^0,\ldots, \MT^{k-1}$. 
		
		\begin{lemma}\label{lem-varcount}
			Let $\cal M=(G,\mathbf R, \pi)$ be a Moser--Tardos tuple, and let $\cal L=(\cal F, \prev,\final)$ be a finalised $\cal M$-landscape. We have
			\begin{equation}\label{eq-239}
				\len(\used_{\cal L}) = \varcount(\cal F).
			\end{equation}
			Furthermore, if $\cal L=\cal L^k_{\cal M,\rnd}$ then
			\begin{equation}\label{eq-022}
				\sum_{x\in V(G)} h^k_{\cal M, \rnd} (x)= \len(\used_{\cal L}).
			\end{equation}
		\end{lemma}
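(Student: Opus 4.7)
The plan is to prove both identities by direct unpacking of the definitions.

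For equation~\eqref{eq-239}, I would compute $\len(\used_{\cal L})$ as a double sum and then swap the order of summation. By the definition of $\used_{\cal L}$, for each $x\in V(G)$ the sequence $\used_{\cal L}(x)$ has length exactly
\[
    \len(\used_{\cal L}(x)) = 1 + \bigl|\{(y,i)\in V(\cal F) : x\in \Var(y)\}\bigr|,
\]
where the ``$1$'' accounts for the final entry $\final(x)$ (this covers both cases in the definition uniformly, since when no such $(y,i)$ exists the set is empty). Summing over $x\in V(G)$ and interchanging the order of summation gives
\[
    \len(\used_{\cal L}) = |V(G)| + \sum_{(y,i)\in V(\cal F)} |\{x\in V(G): x\in \Var(y)\}| = |V(G)| + \sum_{(y,i)\in V(\cal F)} |\Var(y)| = \varcount(\cal F),
\]
which is exactly~\eqref{eq-239}.

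For equation~\eqref{eq-022}, I would invoke Lemma~\ref{lem-equal-used}, which gives $\used_{\cal L^k_{\cal M,\rnd}} = \used^k_{\cal M,\rnd}$, and then observe directly from the definition of $\used^k_{\cal M,\rnd}(x)$ that this sequence is $\rnd(S_\pi(x),0),\ldots,\rnd(S_\pi(x),h^k(x)-1)$ and therefore has length precisely $h^k_{\cal M,\rnd}(x)$. Summing over $x\in V(G)$ yields
\[
    \len(\used_{\cal L^k_{\cal M,\rnd}}) = \sum_{x\in V(G)} \len\bigl(\used^k_{\cal M,\rnd}(x)\bigr) = \sum_{x\in V(G)} h^k_{\cal M,\rnd}(x),
\]
which is~\eqref{eq-022}.

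There is no real obstacle here: both statements are essentially bookkeeping once one recognises that the length contribution of each variable to $\used_{\cal L}$ matches the number of times it appears in the landscape (plus one for $\final$), and that by construction of the algorithm this count coincides with the resampling counter $h^k$. The only minor subtlety is handling the case $\Var(x)=\emptyset$ or $x$ not appearing in any clause of $\cal F$ uniformly; both cases are absorbed by the convention that $\used_{\cal L}(x)$ always ends with $\final(x)$, contributing exactly one to the length, which matches $h^k(x)=1$ coming from the initial sampling $\MT^0(x)=\rnd(S_\pi(x),0)$.
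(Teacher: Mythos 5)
Your proof is correct and follows essentially the same route as the paper: equation~\eqref{eq-239} by the double count $\len(\used_{\cal L}(x)) = 1 + |\{(y,i)\in V(\cal F): x\in\Var(y)\}|$ with the order of summation swapped, and equation~\eqref{eq-022} via Lemma~\ref{lem-equal-used} together with the observation that $\len(\used^k_{\cal M,\rnd}(x)) = h^k_{\cal M,\rnd}(x)$. The paper additionally remarks that the independence of $\cal F$ ensures no overcounting on the left-hand side, but your accounting already handles this implicitly.
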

		\begin{proof}
			The equality~\eqref{eq-239} represents the following double counting: the right-hand side  is
			$$
			|V(G)| + \sum_{(y,i)\in V(\cal F)} |\Var(y)|,
			$$
			whereas the left-hand side is equal to
			$$
			\sum_{x\in V(G)} \len(\used_{\cal L}(x))= \sum_{x\in V(G)} \big(1+ |\{(y,i)\in V(\cal F)\colon x\in \Var(y)\}|\big).
			$$
			The fact that we are not overcounting on the left-hand side follows from the fact that $\cal F$ is independent, i.e.~for fixed  $x\in V(G)$ and $i\in\NN$ there is at most one $(y,i)\in V(\cal F)$ such that $x\in \Var(y)$. This finishes the proof of~\eqref{eq-239}.
			
			We note that 
			$$
			h^k_{\cal M, \rnd} (x) = \len(\used^k_{\cal M,\rnd}(x)),
			$$
			and so the left-hand side of~\eqref{eq-022} is equal to $\len(\used^k_{\cal M,\rnd})$. Since Lemma~\ref{lem-equal-used} shows that  $\used_{\cal M,\rnd}^k = \used_{\cal L^k_{\cal M,\rnd}}$, the equality~\eqref{eq-022} follows.
		\end{proof}

		\subsection{Counting landscapes}
		
		Let $\cal M = (G, \mathbf R,  \pi)$ be a Moser--Tardos tuple. For a given $m\in \NN$, we want to bound the number of grounded finalised $\cal M$-landscapes $(\cal F,\prev, \final)$ such that $|V(\cal F)|= m$. The most important part is counting the possibilities for $\cal F$, and this is what we proceed to do now. 
		
		\newop{indeg}{indeg}
		
		Let $\De\in \NN_+$. A \textit{$\De$-labelled tree} is a finite directed tree such that at each vertex the in-degree is either $0$ or $1$, and such that the edges are labelled by the elements of $\De=\{0,\ldots, \De-1\}$, in such a way that at each vertex all the out-going edges have different labels. The \emph{root} of a $\De$-labelled tree is the unique vertex whose in-degree is equal to $0$.
		
		It is convenient to regard the empty digraph as a $\De$-labelled tree on $0$ vertices. An \emph{isomorphism} between two $\De$-labelled trees is a graph isomorphism that preserves directions and edge labels (and thus, in particular, it preserves the root). By the \emph{iso-class} of $T$ we will mean the class of all trees isomorphic to~$T$.
		
		\begin{lemma}\label{lem-count-dtrees}
			\part  For $i\in \NN$  let $P_i$ be the number of iso-classes of $\De$-labelled trees with $i$ vertices. We have $P_i \le (\me\De)^i$.
			\part For $k,i\in \NN$ let $R_{k,i}$ be the number of all finite sequences $(T_0,\ldots, T_{k-1})$ of iso-classes of $\De$-labelled trees such that $\sum_{j=0}^{k-1} |V(T_j)| = i$.  We have $R_{k,i}\le (i+1)^{k-1}(\me\De)^i$.
			\trap
		\end{lemma}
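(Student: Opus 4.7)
Plan:

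My approach to part (a) is via ordinary generating functions. Define $T(x) := \sum_{i \ge 0} P_i x^i$, with the convention $P_0 = 1$ for the empty tree. The recursive structure of $\De$-labelled trees (a nonempty one is a root together with, for each of the $\De$ possible labels independently, either no outgoing edge of that label or one leading to a $\De$-labelled subtree) yields the functional equation $T(x) = x \cdot (1 + T(x))^{\De}$. Applying the Lagrange inversion formula with $\phi(y) = (1+y)^{\De}$ gives, for every $i \ge 1$, the closed form $P_i = \tfrac{1}{i}[y^{i-1}] (1+y)^{\De i} = \tfrac{1}{i}\binom{\De i}{i-1}$. From this I will derive the claimed bound through elementary estimates: $\binom{\De i}{i-1} \le (\De i)^{i-1}/(i-1)!$, hence $P_i \le (\De i)^{i-1}/i!$, and applying $i! \ge (i/\me)^i$ collapses this to $P_i \le \me^i \De^{i-1}/i \le (\me\De)^i$, as required.

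For part (b), I will simply decompose over the sequence of vertex counts. Writing $i_j := |V(T_j)|$, one has $R_{k,i} = \sum \prod_{j=0}^{k-1} P_{i_j}$, summed over weak compositions $(i_0, \ldots, i_{k-1}) \in \NN^k$ with $\sum_j i_j = i$. Since $P_0 = 1 = (\me\De)^0$ and part (a) handles the case $i_j \ge 1$, each product telescopes to $\prod_j P_{i_j} \le (\me\De)^{\sum_j i_j} = (\me\De)^i$. The number of weak compositions of $i$ into $k$ non-negative parts equals $\binom{i+k-1}{k-1}$, and I will bound this by $(i+1)^{k-1}$ via the trivial observation that the first $k-1$ coordinates already determine the composition (the last is forced by the sum constraint) and each such coordinate lies in $\{0, 1, \ldots, i\}$. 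Multiplying the two bounds yields the claim.

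The only step with any genuine content is the Lagrange inversion computation in part (a); once the closed form for $P_i$ is in hand, everything reduces to routine manipulation of binomial coefficients and factorials, and I do not anticipate a real obstacle.
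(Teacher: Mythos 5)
Your proof is correct, but it takes a genuinely different route from the paper's in both parts. For (a), the paper also works with the generating function $P(X)=\sum_i P_iX^i$, but instead of extracting an exact coefficient it truncates by depth, derives the recursion $Q_{i+1}(X)=1+XQ_i(X)^{\De}$, and shows by induction that all $Q_i$ stay bounded at the critical point $\rho=(\De-1)^{\De-1}/\De^{\De}$, yielding $P_i\le\bigl(\De^{\De}/(\De-1)^{\De-1}\bigr)^i\le(\me\De)^i$. You instead solve the functional equation exactly by Lagrange inversion, obtaining the Fuss--Catalan number $P_i=\frac{1}{i}\binom{\De i}{i-1}$, and then apply $\binom{n}{k}\le n^k/k!$ and $i!\ge(i/\me)^i$. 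Your route gives more (the exact count), the paper's gives a marginally sharper exponential base; both collapse to the same stated bound, and both implicitly assume $\De\ge 1$, which holds in all applications. One small inconsistency to fix: you define $T(x)=\sum_{i\ge0}P_ix^i$ with $P_0=1$, but the equation $T=x(1+T)^{\De}$ forces $T(0)=0$; the equation is for the generating function of \emph{nonempty} trees $\widehat T=\sum_{i\ge1}P_ix^i$, with $1+\widehat T$ counting all trees. Your Lagrange inversion formula is the one for $\widehat T$, so the computation is unaffected. For (b), the paper expands $R_k(X)=P(X)^k$, dominates it coefficientwise by $\bigl(\sum_i(\me\De X)^i\bigr)^k$, and runs an induction on $k$; your direct decomposition over weak compositions, bounding their number by $(i+1)^{k-1}$ via the first $k-1$ coordinates, is an equivalent but more elementary counting argument and is entirely sound (note that $P_0=1=(\me\De)^0$ makes the product bound uniform over compositions with zero parts).
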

		
		\begin{proof} If $\De=1$ then $P_i=1$ and the lemma holds, so assume that $\De\ge 2$.
			
			\part[wide,label=(\alph*)] This proof is based on the presentation in~\cite{spencer-note}. In fact, we will show that  $P_i \le \left(\frac{\De^\De}{(\De-1)^{\De-1}}\right)^i$, which implies the claim by $(\frac{\De}{\De-1})^{\De-1}\le \me$ for every $\De\ge 1$.
			
			Let us consider the formal power series:
			$$
			P(X) := \sum_{i=0}^\infty P_i X^i.
			$$
			Let $\rho := \frac{(\De-1)^{\De-1}}{\De^\De}$. Since $P_i\ge 0$ and $P_0=1$, it is enough to show that $P$ converges at $\rho$ and $P(\rho) \le P_0+1=2$. In fact  we will show that $P(\rho)\le \frac{\De}{\De-1}$.

			For $i,j\in \N$ let us define $Q_{i,j}\in \N$ as the number of $\De$-labelled trees with $i$ vertices and such that all vertices are at the distance at most $j$ from the root. By convention we set $Q_{0,j}:=1$ for all $j\in \NN$.
			Let 
			$$
			Q_j(X) :=\sum_{i=0}^\infty Q_{i,j}X^i.
			$$
			We have that $Q_j$'s are polynomials since $Q_{i,j} = 0$ when $i$ is large enough. The following claim is clear from the definitions.
			\begin{claim*} 
				The polynomials $Q_j$ have non-negative real coefficients, and for all $n\in \NN$  and all $j>n$ we have that the first $n$ coefficients of $Q_j(X)$ are equal to the first $n$ coefficients of $P(X)$.\qed
			\end{claim*}
			
			Since both $Q_j$'s and $P$ have non-negative coefficients, we obtain the following claim as a corollary.
			
			\begin{claim*} 
				If for some $x>0$ the sequence $Q_0(x),Q_1(x),\ldots$ is bounded by $y\in \RR$, then the power series $P(X)$ converges at $x$ and we have $P(x) \le y$.\qed
			\end{claim*}
			
			We proceed to use the above claim to establish that  $P(\rho)\le \frac{\De}{\De-1}$. For this let us show by induction that for all $j\in \NN$ we have $Q_j(\rho)\le  \frac{\De}{\De-1}$.  Since $Q_0(X) = 1+ X$, we have  $Q_0(\rho)=1+ \rho \le 1+\frac{1}{\De-1} =\frac{\De}{\De-1}$. Thus let us assume that for some $j\in\NN$ we have $Q_j(\rho)\le \frac{\De}{\De-1}$. 
			
			Note that a $\Delta$-labelled tree whose all vertices are at a distance at most  $j+1$ from the root is either the empty tree, or consists of the root and a sequence of exactly $\De$ trees (some of which may be empty), each of which has all vertices at distance at most $j$ from its root. This leads us to the following equation valid for all $j\in \NN$:
			$$
			Q_{j+1}(X) =1+X\cdot Q_j(X)^\De.
			$$
			Thus, using the inductive assumption, we have 
			$$
			Q_{j+1}(\rho) \le 1+\rho\cdot \left(\frac{\De}{\De-1}\right)^\De = \frac{\De}{\De-1},
			$$
			which finishes the proof of  (a).
			
			\part 
			
			Let us set $R_k(X) := \sum_{i=0}^\infty  R_{k,i} X^i$. Then we have $  R_k(X) = P(X)^k$. Thus by the previous part, we can bound the coefficients of $ R_k$ from above by the respective coefficients of the series 
			$$
			S_k(X):=\left(\sum_{i=0}^\infty (\me\De X)^i\right)^k.
			$$
			Let us show by induction on $k$ that the $i$-th coefficient $S_{k,i}$ of $S_k(X)=\sum_{i=0}^\infty S_{k,i} X^i$ is bounded from above by $(i+1)^{k-1} (\me\De)^i$. The inductive statement is clearly true for $k=0$ and $1$. Let us assume that it is true for some $k\ge 1$. We write
			$$
			S_{k+1}(X) = S_k(X) \cdot \left(\sum_{i=0}^\infty (\me\De X)^i\right),
			$$
			and so 
			\begin{align*}
				S_{k+1,i} = \sum_{j=0}^i S_{k,j}\,  (\me\De)^{i-j} &\le \sum_{j=0}^i (j+1)^{k-1}(\me\De)^j (\me\De)^{i-j} 
				\\
				&\le (i+1) (i+1)^{k-1} (\me\De)^i 
				\\
				&= (i+1)^k (\me\De)^i,
			\end{align*}
			which finishes the proof of the inductive statement, and the proof of (b).\qedhere
			\trap
		\end{proof}
		
		\begin{lemma}\label{lem-count-landscapes}
			Let $G$ be a finite digraph, let $\cal M= (G,\mathbf R, \pi)$ be a Moser--Tardos tuple, let $\be=\be_{\cal M}$ as defined in~\eqref{eq:beta} and let $\cal F$ be a $G$-forest of finite height. The number of $\cal M$-landscapes $(\cal F, \prev)$ is at most 
			$
			\be^{|V(\cal F)|}\cdot b^{\varcount(\cal  F) -|V(G)|}.
			$
		\end{lemma}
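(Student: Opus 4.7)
The proof should be a straightforward direct counting argument, since $\cal F$ is already fixed and we only need to enumerate the possibilities for the decoration $\prev$.

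My plan is to recall from the definition of an $\cal M$-landscape that $\prev$ is a function on $V(\cal F)$, and the only constraint is that for each vertex $(x,i) \in V(\cal F)$, the value $\prev(x,i)$ must lie in the prescribed set $\mathbf R^c(x) \subset b^{\Var(x)}$. These choices are made completely independently at the different vertices of $\cal F$, so by the multiplication principle the total number of $\prev$'s is
\[
\prod_{(x,i)\in V(\cal F)} |\mathbf R^c(x)|.
\]

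Next, I would rewrite each factor as
\[
|\mathbf R^c(x)| = \frac{|\mathbf R^c(x)|}{b^{|\Var(x)|}} \cdot b^{|\Var(x)|} \le \be \cdot b^{|\Var(x)|},
\]
using the definition of $\be$. Substituting this into the product gives the upper bound $\be^{|V(\cal F)|}\cdot b^{\sum_{(x,i)\in V(\cal F)}|\Var(x)|}$.

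Finally, I would identify the exponent of $b$ with $\varcount(\cal F)-|V(G)|$ directly from the definition
\[
\varcount(\cal F) = |V(G)| + \sum_{(x,i)\in V(\cal F)} |\Var(x)|,
\]
which yields the desired bound $\be^{|V(\cal F)|}\cdot b^{\varcount(\cal F)-|V(G)|}$. There is no real obstacle here: the lemma is a bookkeeping consequence of the definitions of an $\cal M$-landscape, $\be$, and $\varcount$. The only thing worth double-checking is that the $\prev(x,i)$'s are genuinely unconstrained beyond the individual membership requirement $\prev(x,i)\in \mathbf R^c(x)$, which is indeed the case since independence of $\cal F$ is a condition on $\cal F$ alone and does not couple the values of $\prev$ at different vertices.
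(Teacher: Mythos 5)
Your proposal is correct and is essentially identical to the paper's own proof: both count $\prev$ by the product $\prod_{(x,i)\in V(\cal F)} |\mathbf R^c(x)|$, bound each factor by $\be\cdot b^{|\Var(x)|}$, and identify the resulting exponent of $b$ with $\varcount(\cal F)-|V(G)|$ from the definition of $\varcount$. Nothing is missing.
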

		\begin{proof} For every $(x,i)\in V(\cal F)$ we have that $\prev(x,i)\in \mathbf R^c(x)$. Thus there are at most
			$$
			\prod_{(x,i)\in V(\cal F)} |\mathbf R^c(x)| = \prod_{(x,i)\in V(\cal F)} \left(\frac{|\mathbf R^c(x)|}{|b^{\Var(x)}|}\cdot \left|b^{\Var(x)}\right| \right) 
			$$
			possibilities for $\prev$, which is bounded from above by
			$$
			\be^{|V(\cal F)|} \cdot b^{\sum_{(x,i)\in V(\cal F)} |\Var(x)|} =\be^{|V(\cal F)|} \cdot b^{\varcount(\cal F)-|V(G)|}.
			$$
			This finishes the proof.
		\end{proof}

		\begin{cory}\label{cory-count}
			Let $G$ be a finite digraph, let $\cal M = (G, \mathbf R,  \pi)$ be a Moser--Tardos tuple, and let  $\De :=\maxdeg(\Rel(G))>0$.  
			\part For $m\in \NN$ the number of grounded $G$-forests $\cal F$ with $|V(\cal F)| = m$ is bounded from above by $(m+1)^{|V(G)|-1}(\me\De)^m$.
			\part Let $\cal F$ be a grounded $G$-forest,  $m:=|V(\cal F)|$, $v:=\varcount(\cal F)$ and $\be:=\beta_{\cal M}$.
			The number of pairs $(\prev,\final)$ such that $(\cal F,\prev,\final)$ is a finalised $\cal M$-landscape is bounded from above by 
			$
			\be^{m}\cdot b^{v}.
			$
			\trap
		\end{cory}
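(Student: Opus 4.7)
The plan is to derive both parts directly from the lemmas established earlier in the subsection, with the main content being an injective encoding of grounded $G$-forests by tuples of labelled trees.

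For part~(a), I would first fix, once and for all, an ordering of $N_{\Rel(G)}(x)$ for each $x\in V(G)$; since $\maxdeg(\Rel(G))=\De$, this gives an injection $N_{\Rel(G)}(x)\into \De$. A grounded $G$-forest $\cal F$ has its roots at level~$0$, and those roots are of the form $(x,0)$ for distinct $x\in V(G)$. Because edges of $\Canvas(G)$ go strictly from level $i$ to level $i+1$, and because at each vertex $(y,i)\in V(\cal F)$ the possible out-neighbours in $\cal F$ are precisely some of the vertices $(z,i+1)$ with $z\in N_{\Rel(G)}(y)$, labelling each edge $(y,i)\to(z,i+1)$ of $\cal F$ by the index of $z$ in the ordering of $N_{\Rel(G)}(y)$ turns each connected component rooted at $(x,0)$ into a $\De$-labelled tree. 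Conversely, given the choice of root $x$ and a $\De$-labelled tree, the embedding back into $\Canvas(G)$ is forced by following the labels. This yields an injection from grounded $G$-forests with $|V(\cal F)|=m$ into $|V(G)|$-tuples of iso-classes of $\De$-labelled trees (indexed by $V(G)$, with vertices that are not roots of any component corresponding to the empty tree) whose sizes sum to $m$. The number of such tuples is $R_{|V(G)|,m}$, and Lemma~\ref{lem-count-dtrees}(b) bounds this by $(m+1)^{|V(G)|-1}(\me\De)^m$, as required.

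For part~(b), with $\cal F$ held fixed, Lemma~\ref{lem-count-landscapes} bounds the number of admissible $\prev$ by $\be^{m}\cdot b^{\varcount(\cal F)-|V(G)|}=\be^{m}\cdot b^{v-|V(G)|}$. The finalisation $\final$ is an arbitrary element of $b^{V(G)}$, contributing a factor of $b^{|V(G)|}$. Multiplying the two gives $\be^{m}\cdot b^{v}$, as claimed.

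The only point requiring any care is the encoding in part~(a): one must check that distinct $G$-forests produce distinct labelled tuples (immediate, because the labels together with the fixed orderings recover every edge), and that the constraint ``different out-edges at a vertex carry different labels'' is respected (automatic, since different out-edges of $\cal F$ at $(y,i)$ go to different $\Rel(G)$-neighbours of $y$). I expect no further obstacles, since the rest is purely arithmetic.
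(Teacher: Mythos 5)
Your proof is correct and follows essentially the same route as the paper's: part (a) encodes a grounded forest as a function from $V(G)$ to iso-classes of $\De$-labelled trees (after fixing an order on each $N_{\Rel(G)}(x)$) and invokes Lemma~\ref{lem-count-dtrees}(b), while part (b) combines Lemma~\ref{lem-count-landscapes} with the factor $b^{|V(G)|}$ for $\final$. The paper states these steps more tersely; your write-up just makes the injectivity of the encoding explicit.
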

		\begin{proof}
			
			\part Let $T_\De$ be the set of all iso-classes of $\De$-labelled trees. 
			For every $x\in V(G)$, let us fix an order on $N_{\Rel(G)}(x,1)$. This naturally defines a labelling of the edges going out from any given vertex of $\Canvas(G)$ by the elements of $\{0,\dots,\De-1\}$, and subgraphs inherit this $\De$-labelling. This 
			way, since we only consider grounded landscapes, in order to specify 
			$\cal F$ we need to specify a function $f\colon V(G) \to T_\De$ such 
			that $\sum_{x\in V(G)} |V(f(x))| =m$. By Lemma~\ref{lem-count-dtrees}, the 
			number of such functions $f$ is bounded from above by 
			$(m+1)^{|V(G)|-1}(\me\De)^m$.
			
			\part This follows from Lemma~\ref{lem-count-landscapes} and the fact that $\final \in b^{V(G)}$.\qed
			\trap
			\renewcommand{\qedsymbol}{}
		\end{proof}

		\subsection{Interlude: Analysis of the classical MTA}\label{subsec-interludium}
		
		We are now in a position to prove a variant of Theorem~\ref{thm-main} under the assumption that $G$ is finite and  $\pi$ consists of singletons of all vertices in $V(G)$.
		
		The following theorem essentially replicates the known results about the standard MTA. Presenting the proof in this special case will allow us to motivate better the upcoming definitions.
		
		\begin{theorem}\label{thm-main-classic}
			Let $G$ be a finite digraph, let $\cal M = (G, \mathbf R,  \pi)$ be a Moser--Tardos tuple such that $\pi$ consists of the singletons of all vertices of $G$.  Let $\be:=\be_{\cal M}$ as defined in~\eqref{eq:beta}, $\De :=\maxdeg(\Rel(G))>0$ and $d:=\maxoutdeg(G)$. Suppose that $\delta>0$ satisfies
			\begin{equation}\label{eq:delta}
				\be 
				\le  \frac{1}{(\me\De)^{1+\delta}}.
			\end{equation}
			Then for all integers $m\ge 0$ and $k\ge 1$ we have  
			\begin{equation}\label{eq-wqeq35}
				\Pr_{\rnd}\left(|V(\cal L^k_{\cal M, \rnd})| = m\right) \le \frac{ d\,|V(G)|\,(m+1)^{|V(G)|}}{(\me\De)^{\delta\cdot m}}.
			\end{equation}
		\end{theorem}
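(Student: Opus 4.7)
The plan is to perform a union bound over all grounded finalised $\cal M$-landscapes with $m$ vertices, using that the per-landscape probability is tightly controlled by Lemmas~\ref{lem-equal-used} and~\ref{lem-varcount}, so that a factor of $b^{\varcount(\cal F)}$ cancels between the counting bound and the probability bound.

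First I would fix a grounded finalised $\cal M$-landscape $\cal L=(\cal F,\prev,\final)$ with $|V(\cal F)|=m$ and estimate $\Pr_\rnd(\cal L^k_{\cal M,\rnd}=\cal L)$ from above. By Lemma~\ref{lem-equal-used}, this event forces $\used^k_{\cal M,\rnd}=\used_{\cal L}$. Because $\pi$ consists of singletons, this in turn fixes the first $\len(\used_{\cal L}(x))$ entries of $\rnd(\{x\},\cdot)$ for each $x\in V(G)$, and leaves the remaining coordinates of $\rnd_k\in b^{\pi\times k}$ unconstrained. By Lemma~\ref{lem-varcount}, the total number of fixed coordinates equals $\len(\used_{\cal L})=\varcount(\cal F)$, so
\[
\Pr_\rnd(\cal L^k_{\cal M,\rnd}=\cal L)\le b^{-\varcount(\cal F)}.
\]

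Next I would count landscapes. By Corollary~\ref{cory-count}(a), the number of grounded $G$-forests $\cal F$ with $|V(\cal F)|=m$ is at most $(m+1)^{|V(G)|-1}(\me\De)^m$, and by Corollary~\ref{cory-count}(b), each such $\cal F$ admits at most $\be^{m}b^{\varcount(\cal F)}$ extensions to a finalised $\cal M$-landscape. Multiplying the two counts by the per-landscape probability, the factors of $b^{\varcount(\cal F)}$ and $b^{-\varcount(\cal F)}$ cancel, yielding
\[
\Pr_\rnd\bigl(|V(\cal L^k_{\cal M,\rnd})|=m\bigr)\le(m+1)^{|V(G)|-1}(\me\De)^m\be^m.
\]
The hypothesis~\eqref{eq:delta} then gives $(\me\De)^m\be^m\le (\me\De)^{-\delta m}$, and the resulting bound is stronger than~\eqref{eq-wqeq35}.

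There is no genuine obstacle here, since all the heavy lifting has already been done in the preceding subsections. The only point requiring care is the cancellation of $b^{\varcount(\cal F)}$ factors, which reduces to the identity $\len(\used_{\cal L})=\varcount(\cal F)$ established in Lemma~\ref{lem-varcount}; this in turn relies on the independence of $\cal F$ to avoid double-counting when reading off bits of $\rnd$ from the $\prev$-decoration.
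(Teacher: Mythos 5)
Your proof is correct and follows essentially the same route as the paper: encode the run by a grounded finalised landscape, recover the used bits via Lemmas~\ref{lem-equal-used} and~\ref{lem-varcount}, and union-bound using Corollary~\ref{cory-count} so that the $b^{\varcount(\cal F)}$ factors cancel. The only (cosmetic) difference is that you bound the probability of each fixed landscape by $b^{-\varcount(\cal F)}$ directly instead of stratifying by $v=\varcount$ and counting pairs $(\cal L,\unused)$ as the paper does, which in fact spares you the paper's extra factor $dm+|V(G)|$ and yields a slightly stronger bound that still implies~\eqref{eq-wqeq35}.
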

		
		\begin{myrem}
			\part We note that the event $|V(\cal L^k_{\cal M, \rnd})| = m$ means exactly that the number of clauses resampled in the process of constructing $\MT^{k-1}_{\cal M,\rnd}$ is equal to $m$.
			
			\part The expected total number of resampled clauses is the quantity $\E_{\rnd}(|V(\cal L_{\cal M,\rnd})|)$.
			Note that there is $C=C_{\De,\de, d,b}$ (which does not depend on $k$) such that if we define $m_0:=C\,|V(G)|\log|V(G)|$ then the right-hand side of~\eqref{eq-wqeq35} (which decreases exponentially fast with $m\to\infty$ with the ratio approaching $(\me \De)^{-\de}<1$) is at most $1$ for $m=m_0$ and, for each $m\ge m_0$, decreases by factor at least $1-1/C$ when we increase $m$ by~$1$. It follows that the expectation $\E_{\rnd}(|V(\cal L^k_{\cal M,\rnd})|)$ is at most $O(m_0)$. As such we deduce that $\E_{\rnd}(|V(\cal L_{\cal M,\rnd})|)$ is also bounded by $O(m_0)=O(|V(G)|\log|V(G)|)$.
			\trap
		\end{myrem}
		\begin{proof}[Proof of Theorem~\ref{thm-main-classic}] Take any $k\in \NN_+$. Recall that $\cal L_{\cal M,\rnd}$ is the $\cal M$-landscape associated to $(\cal M,\rnd)$.  Note that by our assumption on $\pi$, the probability space of which 
			$\rnd$ is an element can be taken to be $b^{V(G)\times \NN}$. 
			
			\begin{claim*}  
				Let $\cal L$ be a finalised grounded $\cal M$-landscape of height at most $k-1$. The number of functions $\phi\colon V(G)\to b^{\oplus \NN}$ such that $(\cal L,\phi) = (\cal L^k_{\cal M,\rnd}, \unused^k_{\cal M, \rnd})$ for some $\rnd\in b^{V(G)\times k}$ is bounded from above by 
				$$
				b^{k|V(G)|- \varcount(\cal L)}.
				$$
			\end{claim*}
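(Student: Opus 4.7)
My plan is to observe that fixing the finalised landscape $\cal L$ determines, for each $x\in V(G)$, the prescribed length of $\unused^k_{\cal M,\rnd}(x)$ for any $\rnd$ with $\cal L^k_{\cal M,\rnd}=\cal L$. Indeed, Lemma~\ref{lem-equal-used} gives $\used^k_{\cal M,\rnd}=\used_{\cal L}$, and since $\rnd_k(S_\pi(x),\cdot)$ is the concatenation of $\used^k_{\cal M,\rnd}(x)$ and $\unused^k_{\cal M,\rnd}(x)$, we read off
$$
\len(\unused^k_{\cal M,\rnd}(x)) \;=\; k-\len(\used^k_{\cal M,\rnd}(x)) \;=\; k-\len(\used_{\cal L}(x)).
$$
In particular, the length pattern of any admissible $\phi$ is determined by $\cal L$ alone.

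With the lengths fixed, I would invoke Remark~\ref{rem-lengths}: the number of functions $\phi\colon V(G)\to b^{\oplus\NN}$ realising a prescribed length pattern is exactly $b^{\len(\phi)}$. Summing the lengths gives
$$
\len(\phi) \;=\; \sum_{x\in V(G)}\bigl(k-\len(\used_{\cal L}(x))\bigr) \;=\; k\,|V(G)| - \len(\used_{\cal L}),
$$
and Lemma~\ref{lem-varcount} (specifically equation~\eqref{eq-239}) identifies $\len(\used_{\cal L})$ with $\varcount(\cal L)$. Substituting yields the asserted bound $b^{k|V(G)|-\varcount(\cal L)}$.

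The reason the claim is stated as an upper bound rather than an equality is that not every $\phi$ with the correct length pattern needs to actually arise from a $\rnd$ whose run produces exactly $\cal L$: concatenating $\used_{\cal L}(x)$ with an arbitrary such $\phi(x)$ produces some $\rnd\in b^{V(G)\times k}$, but running the MTA on this $\rnd$ need not recreate the landscape $\cal L$. The counting argument only uses the one-sided fact that every admissible $\phi$ has its length pattern forced by $\cal L$, which is precisely the content of Lemma~\ref{lem-equal-used}. I do not anticipate any real obstacle here; the whole proof is a short bookkeeping step combining Lemma~\ref{lem-equal-used}, Remark~\ref{rem-lengths}, and Lemma~\ref{lem-varcount}, with no combinatorial content beyond what has already been developed.
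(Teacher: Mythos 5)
Your proposal is correct and follows essentially the same route as the paper: the length of each $\phi(x)$ is forced to be $k-\len(\used_{\cal L}(x))$ via Lemma~\ref{lem-equal-used} and the concatenation structure of $\rnd$, the lengths sum to $k|V(G)|-\varcount(\cal L)$ by Lemma~\ref{lem-varcount}, and Remark~\ref{rem-lengths} then gives the count. Your closing observation about why the bound is only one-sided is accurate but not needed for the argument.
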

			\begin{proof}[Proof of Claim]
				Let $\phi$ come from some $\rnd$. Then, for all $x\in V(G)$, we have that $\phi(x)$ is a sequence of length 
				$$ 
				\len(\unused^k_{\cal M,\rnd}(x)) = k - \len(\used_{\cal L^k_{\cal M,\rnd}}(x)) = k -\len(\used_{\cal L}(x)).
				$$
				If we sum this over all $x\in V(G)$ then we obtain $k|V(G)|-\varcount(\cal L)$ by Lemma~\ref{lem-varcount}. Now the claim follows from Remark~\ref{rem-lengths}.
			\end{proof}
			
			Let us fix $k\in \NN_+$. For $m,v\in \NN$ let
			$$
			P_{m,v}:= \Pr_{\rnd} \left(|V(\cal L^k_{\cal M, \rnd})| = m,\, \varcount(\cal L^k_{\cal M,\rnd})=v\right),
			$$
			and let 
			$$
			P_m :=  \Pr_{\rnd} \left(|V(\cal L^k_{\cal M, \rnd})| = m\right),
			$$
			which is the same as the left-hand side of~\eqref{eq-wqeq35}.
			
			Note 
			that $\rnd_k\in b^{V(G)\times k}$ can be recovered from the 
			pair $(\used^k_{\cal M,\rnd}, \unused^k_{\cal M,\rnd})$, and hence also 
			from the pair $(\cal L^k_{\cal M,\rnd}, \unused^k_{\cal M, \rnd})$.  
			This implies that we have
			
			\begin{align}\label{eq-classic-problem}
					P_{m,v} \le \frac{1}{b^{k|V(G)|}}\cdot |\{(\cal L,\phi)\colon &\text{$\cal L$ is a grounded finalised $\cal M$-landscape,}
					\\
					&|V(\cal L)| = m, \varcount(\cal L)=v,  
					\notag
					\\
					&\text{$(\cal L,\phi) = (\cal L^k_{\cal M,\rnd}, \unused^k_{\cal M, \rnd})$ for some $\rnd\in b^{V(G)\times \NN }$}\}|. 
					\notag
			\end{align}
			
			By Corollary~\ref{cory-count} and the claim above,  this is bounded by 
			$$
			\frac{1}{b^{k|V(G)|}}\cdot (m+1)^{|V(G)|-1}(\me\De)^m  \cdot \beta^m \cdot b^{v} \cdot b^{k|V(G)|- v},
			$$
			which is equal to
			$$
			(m+1)^{|V(G)|-1} \cdot (\be e \De)^m.
			$$
			This bound on $P_{m,v}$ does not depend on $v$. Since $P_{m,v}=0$ when $v>dm+|V(G)|$, we deduce that 
			$$
			P_m \le (dm+|V(G)|)\cdot (m+1)^{|V(G)|-1} \cdot (\be e \De)^m.
			$$
			Now using~\eqref{eq:delta} and that, rather roughly, $dm+|V(G)|\le d(m+1)\,|V(G)|$, we get the required bound.
		\end{proof}

		\subsection{Some ideas behind re-using random bits}\label{se:ideas}
		
		Let us informally describe what needs to be modified in order to prove Theorem~\ref{thm-main}. In Theorem~\ref{thm-main-classic} we had an element $\rnd\in b^{V(G)\times \NN}$, i.e.~each vertex has ``its own stream of randomness''. In Theorem~\ref{thm-main} this is no longer the case since $\rnd\in b^{\pi\times \NN}$, so there are only $|\pi|$  streams of randomness, no matter how big the digraph $G$ is. However the number of landscapes is still exponential in $|V(G)|$, because the number of $G$-forests is exponential in $|V(G)|$.  Since the inequality~\eqref{eq-classic-problem} would start with $\frac{1}{b^{k|\pi|}}$ on the right-hand side, it would not provide any interesting information because the right-hand side would be strictly larger than $1$ (in fact exponentially large in $|V(G)|$).
		
		The remedy is to restrict the landscape to a carefully chosen  subset $U\subset V(G)$  on which $\pi$ induces a singleton partition. The choice of this subset is different for different $\rnd\in b^{\pi\times k}$; essentially, we take a ball $U$ of carefully chosen (bounded) radius $r$
		around one of the most resampled vertices $y$. Although the restriction of the finalised landscape to $U$ does not in general allow us to reconstruct all random bits used at $U$, it does determine the used bits for every $x\in U$ such that all clauses containing $x$ are entirely inside~$U$.    
		By choosing $r$ carefully (using the assumption of subexponential growth), we can ensure that the number of ``boundary'' clauses is much smaller than the number of ``internal'' clauses and, additionally,  the restriction of $\pi$ to $U$ is injective. The main new idea is roughly to restrict the argument of Theorem~\ref{thm-main-classic} to the set $U$ (where no input bit is re-used), using the trivial upper bound of $b$ for the number of choices of each unknown random bit inside $U$ and doing the union bound over all possible restrictions of $G$ to $U$ (where there are only bounded many choices up to a $\pi$-preserving isomorphism).
		
		However, when we restrict a grounded landscape to $U$, it does not need to be grounded anymore, while the bounds in Corollary~\ref{cory-count} are valid only for grounded landscapes. As such we have to find a grounded landscape on $U$ which is equivalent (meaning that the functions that turn landscape decorations into sequences of used bits are the same for the two landscapes). 
		
		The next two subsections of this section are about defining the restrictions of landscapes and finding equivalent grounded landscapes, respectively. After discussing these preliminaries, we will prove Theorem~\ref{thm-main} in the third remaining subsection.

		\subsection{Landscape restrictions}\label{subsec-restrictions}
		\newcommand{\Res}{\operatorname{Res}}
		
		
		Let $\cal M = (G, \mathbf R,  \pi)$ be a Moser--Tardos tuple and let $\cal L=(\cal F,\prev)$ be an $\cal M$-landscape. We say that $U\subset V(G)$ is \emph{$\pi$-unique} if the map $S_\pi\colon V(G)\to \pi$ is injective on $U$. 
		
		The aim of this subsection is to define restrictions $\Res_U(\cal M)$ and $\Res_U(\cal L)$ of  $\cal M$ and $\cal L$ respectively, to a $\pi$-unique set $U$.
		\begin{myrem} Some extra effort is needed because it is convenient to define e.g.~the restriction $\Res_U(\cal M)$ in such a way that the underlying graph always has the same set of vertices, i.e.~$\pi$. 
		\end{myrem}
		
		For a digraph $G$ and a set $U\subset V(G)$ we define $G\restriction U := (U, E(G)\cap U^2)$ to be the digraph induced from $G$ on $U$.
		
		Let $\cal M = (G, \mathbf R,  \pi)$ be a Moser--Tardos tuple and let 
		$U\subset V(G)$ be $\pi$-unique. If $\al\in \pi$ is such that $\al = S_\pi(x)$ for some $x\in U$ then we let $T_U(\al):=x$. Thus $T_U$ is a bijection from a subset of $\pi$ to~$U$.
		
		We first define the Moser--Tardos tuple $\Res_U(\cal M) = (G',\mathbf R', \pi')$ as follows. We let 
		$V(G'):=\pi$ and $E(G'):=\{(S_\pi(u),S_\pi(v)):(u,v)\in E(G),\ u,v\in U\}$, that is, the set of edges of $G'$ is the minimal subset of 
		$\pi\times \pi$ which makes the map $S_\pi$ a digraph isomorphism from 
		$G\restriction U$ onto its image in $G'$. We define $\pi'$ to be the partition of the set $\pi$ into singletons.
		
		Let $\al\in \pi$. We define $\mathbf R'(\al)$ by considering two cases. 
		
		First, suppose that $\al= S_\pi(x)$ for some $x\in U$ and furthermore $\Var(x)\subset U$. Then we let 
		\begin{equation}\label{eq-sdg2}
			\mathbf R' (\al) := \{f\circ T_U\colon f \in \mathbf R(x)\}.
		\end{equation}
		Note that when $f\in \mathbf R(x)\subset b^{\Var_G(x)}$ then $f\circ T_U$ is defined on $\Var_{G'}(\al)$, and thus in the definition above the function $f\circ T_U$ should be understood as an element of $b^{\Var_{G'}(\al)}$. In other words, the composition operator in~\eqref{eq-sdg2} should be understood as acting on partial functions.
		
		If the first case does not hold then we define $\mathbf R'(\al)$ to be $b^{\Var_{G'}(\al)}$, that is, the clause $\alpha$ does not impose any constraints.
		
		Clearly, $|\mathbf R'(\al)|/b^{|\Var_{G'}(\al)|}\ge |\mathbf R(x)|/b^{|\Var_G(x)|}$ for every $x\in U$ and $\al=S_\pi(x)$, so  $\beta_{(G', \mathbf R')}\le \beta_{(G, \mathbf R)}$. Informally speaking, we induce the local rule $\mathbf R'$ as the restriction $\mathbf R$ to $U$ where it makes sense, and put no constraint otherwise.
		
		We proceed similarly when $\cal L=(\cal F,\prev)$ is an $\cal 
		M$-landscape. Namely, we let $\Res_U(\cal L)$ be the following $\Res_U(\cal 
		M)$-landscape $(\cal F', \prev')$. We let 
		$$
		V(\cal F') := \{(S_\pi(x),i)\colon (x,i)\in V(\cal F),\ x\in U \text{ and } \Var(x)\subset U\},
		$$ 
		and we add an edge from $(S_\pi(x),i)$ to $(S_\pi(y),i+1)$ if and only if both $(S_\pi(x),i)$ and $(S_\pi(y),i+1)$ belong to $V(\cal F')$ and there is an edge from $(x,i)$ to $(y,i+1)$ in $\cal F$. For $(\al,i)\in V(\cal F')$, take the (unique) $x\in U$ with $\al = S_\pi(x)$ and let 
		\begin{equation}\label{gg21}
			\prev'(\al,i) := \prev(x,i)\circ T_U.
		\end{equation}
		We note that the composition in~\eqref{gg21} should be understood as acting on partial functions, similarly as in the case of the definition of $\mathbf R'(\alpha)$.
		Clearly, if $(\al,i)\in V(\cal F')$ then $\prev'(\al,i)$ violates $\mathbf R'(\al)$, as formally required in our definition of a landscape.
		
		Finally, suppose that we also have a function $\final:V(G)\to b$ (so that  $(\cal F,\prev,\final)$ is a finalised landscape).  Its restriction $\Res_U(\final)$ is defined to  assume value $\final(x)$ on every $\alpha=S_\pi(x)$ with $x\in U$ and value $0$ on all other elements of~$\pi$. Also, we let
		\[
		\Res_U(\cal F,\prev,\final):=(\cal F',\prev',\Res_U(\final)).
		\]
		
		\begin{lemma}\label{lem-restrict}
			Let $\cal M = (G, \mathbf R,  \pi)$ be a Moser--Tardos tuple, let $\cal L$ be a finalised $\cal M$-landscape, let $U\subset V(G)$ be a $\pi$-unique set, and let $x\in U$ be such that $\Var_G(\Cl_G(x))\cup \Cl_G(x)\subset U$. Then 
			$$
			\used_{\cal L}(x) = \used_{\Res_U(\cal L)} (S_\pi(x)).
			$$ 
		\end{lemma}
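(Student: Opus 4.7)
The plan is to unfold the definition of $\used$ at both $x$ and $S_\pi(x)$ and match the sequences term by term. Throughout I will write $x' := S_\pi(x)$ and $\cal L' := \Res_U(\cal L) = (\cal F',\prev',\final')$, where I take the natural extension of the restriction construction to finalised landscapes, namely $\final'(S_\pi(y)) := \final(y)$ for $y\in U$.

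The first step is to show that the map $(y,i)\mapsto (S_\pi(y),i)$ is a bijection between
$$
\{(y,i)\in V(\cal F)\colon x\in \Var_G(y)\}\quad \text{and}\quad \{(\al,i)\in V(\cal F')\colon x'\in \Var_{G'}(\al)\}.
$$
For the forward direction, any $y$ with $x\in \Var_G(y)$ satisfies $y\in \Cl_G(x)\subset U$ by hypothesis, so $(S_\pi(y),i)\in V(\cal F')$ by definition of $\cal F'$. Moreover, since $S_\pi$ restricts to a digraph isomorphism from $G_{|U}$ onto its image in $G'$, the relation $x\in \Var_G(y)$ transports to $x'\in \Var_{G'}(S_\pi(y))$. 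For the reverse direction, any $(\al,i)\in V(\cal F')$ is of the form $\al=S_\pi(z)$ with a unique $z\in U$ (uniqueness by $\pi$-uniqueness of $U$), and $x'\in \Var_{G'}(S_\pi(z))$ then forces $x\in \Var_G(z)$ by the same isomorphism. Injectivity of $S_\pi$ on $U$ gives bijectivity. Since the bijection preserves the second coordinate, the strictly increasing enumeration $s_1<\dots<s_k$ used in defining $\used$ coincides on both sides.

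The second step is to check that the decorations agree. Fix $(y,i)$ indexing a term on the left. The crucial use of the hypothesis $\Var_G(\Cl_G(x))\cup\Cl_G(x)\subset U$ appears here: since $y\in \Cl_G(x)$, we have $y\in U$ and $\Var_G(y)\subset \Var_G(\Cl_G(x))\subset U$. Both conditions of the first case in the definition of $\prev'$ are therefore satisfied, giving $\prev'(S_\pi(y),i)=\prev(y,i)\circ T_U$. Evaluating at $x'$ and using $T_U(S_\pi(x))=x$ (valid because $x\in U$), we obtain
$$
\prev'(S_\pi(y),i)(x')=\prev(y,i)(x).
$$
The trailing entry matches by our choice $\final'(x')=\final(x)$.

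Combining the two steps gives the lemma. When the indexing set of step one is empty, both $\used_{\cal L}(x)$ and $\used_{\cal L'}(x')$ reduce to the one-term sequence $(\final(x))$; otherwise they are equal sequences of length $k+1$. The only genuine subtlety, and thus the part that merits careful attention, is making sure that for every $y$ which appears in the enumeration we land in the first, non-trivial branch of the definition of $\prev'$. This is precisely what the slightly baroque hypothesis $\Var_G(\Cl_G(x))\cup\Cl_G(x)\subset U$ is engineered to deliver.
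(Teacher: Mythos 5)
Your proof is correct and follows essentially the same route as the paper's: the hypothesis $\Var_G(\Cl_G(x))\cup\Cl_G(x)\subset U$ guarantees that every clause $y$ containing $x$ as a variable lands in the non-trivial branch of the definitions of $\mathbf R'$ and $\prev'$, after which the equality is a matter of unwinding definitions. You simply spell out the bijection between the indexing sets and the matching of decorations that the paper leaves implicit (including the harmless point that the finalisation $\final'$ of the restricted landscape must be taken as $\final'(S_\pi(y))=\final(y)$ for $y\in U$).
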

		\begin{proof}
			The condition $\Var(\Cl(x))\cup \Cl(x)\subset U$ ensures that whenever $x$ is a variable in some clause $y$ then $y\in U$ and $\Var(y) \subset U$. This means that for all $y\in \Cl_G(x)$ we have that $\mathbf R' (S_\pi(y))$ is defined by the formula in~\eqref{eq-sdg2}. Similarly for all $i\in\NN$ with $(y,i)\in V(\cal L)$ we have that  $\prev'(S_\pi(y),i)$ is defined by the formula in~\eqref{gg21}. Thus the required equality follows from the definitions of $\used$ and $\Res_U(\cal L)$. 
		\end{proof}

		\subsection{Equivalent landscapes}\label{subsec-equiv}
		\newcommand{\G}{\operatorname{G}}
		\newcommand{\Push}{\operatorname{Push}}
		
		Let $\cal M = (G, \mathbf R,  \pi)$ be a Moser--Tardos tuple, and let $\cal K$ and $\cal L$ be finalised $\cal M$-landscapes. We say that $\cal L$ and $\cal K$ are \emph{equivalent} if $|V(\cal L)| = |V(\cal K)|$ and $\used_{\cal L} = \used_{\cal K}$. In particular, if $\cal L$ and $\cal K$ are equivalent then $\varcount(\cal L) = \varcount(\cal K)$, since  $\varcount(\cal L) = \len(\used_{\cal L})$.
		
		The purpose of this subsection is to prove the following proposition.
		
		\begin{proposition}\label{prop-equiv}
			Let $\cal M = (G, \mathbf R,  \pi)$ be a Moser--Tardos tuple with finite $G$, and let $\cal L$ be a finalised $\cal M$-landscape. There exists a grounded finalised landscape $\cal K$ which is equivalent to $\cal L$.
		\end{proposition}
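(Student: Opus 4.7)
My plan is to construct $\cal K = (\cal G, \prev', \final)$ by a greedy re-embedding of $V(\cal F)$ into $\Canvas(G)$, processing vertices in increasing order of their original second coordinate (with ties broken arbitrarily). When processing $v = (y,s)$, I first compute
\[
s^{*}(v) := 1 + \max_{x\in\Var(y)}\max\bigl\{\, t : (y^*, t)\in V(\cal G)\text{ already placed},\ x\in\Var(y^*)\,\bigr\},
\]
with the convention that an empty inner maximum equals $-1$; then I let $s'(v)$ be the smallest integer $\ge s^{*}(v)$ such that no vertex currently placed at level $s'(v)$ in $\cal G$ is a $\Rel(G)$-neighbour of $y$. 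I add $(y, s'(v))$ to $V(\cal G)$, set $\prev'(y, s'(v)) := \prev(y,s)$, and, whenever $s'(v) > 0$, add a single edge $(y'', s'(v)-1) \to (y, s'(v))$ for some $y''$ already present at level $s'(v)-1$ with $y'' \sim y$ in $\Rel(G)$.

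The main point to justify is the existence of the parent $y''$ when $s'(v) > 0$. By minimality of $s'(v)$, placing at level $s'(v)-1$ would fail for at least one of two reasons: either some $x \in \Var(y)$ has its latest prior appearance at a vertex $(y^*, s'(v)-1)\in V(\cal G)$ with $x\in \Var(y^*)$, forcing $y^* \sim y$ via the shared variable $x$; or the independence constraint forbids the placement because some already-placed $(y'', s'(v)-1)$ satisfies $y'' \sim y$ in $\Rel(G)$. In either case one finds a $\Rel(G)$-neighbour of $y$ at level $s'(v)-1$, and the corresponding edge belongs to $\Canvas(G)$; adding exactly one such edge keeps the in-degree of $(y, s'(v))$ equal to one, so this new vertex is not a root of $\cal G$.

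All remaining verifications then fall out. The map $v \mapsto (y, s'(v))$ is injective: every $y$ appearing in $\cal F$ has $\Var(y) \neq \emptyset$ (since $\prev(y, s) \in \mathbf R^c(y)$ forces $\mathbf R^c(y)\neq\emptyset$), and so for any two $v_1, v_2 \in V(\cal F)$ sharing the same first coordinate $y$, the earlier placement is detected by $s^{*}$ at any $x \in \Var(y)$ and pushes the later placement to a strictly higher level, giving $|V(\cal K)| = |V(\cal F)|$. The definition of $s^{*}$ also forces, for each $x$, the $x$-appearances in $\cal G$ to occur at strictly increasing levels in the same order as in $\cal F$, and since $\prev'(y, s'(v))(x) = \prev(y,s)(x)$ and $\final$ is unchanged, we obtain $\used_{\cal K}(x) = \used_{\cal L}(x)$ for every $x$. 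Level-wise independence of $\cal G$ is imposed directly by the construction, and grounding follows because every root has in-degree zero while every $(y, s'(v))$ with $s'(v) > 0$ was given an in-edge.

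The principal obstacle I anticipate is the clean case analysis underlying parent-existence, especially the interplay with self-loops in $\Rel(G)$, which are present at every $y$ with $\Var(y)\neq\emptyset$ but do not impede independence within a single level because the definition of independence excludes self-loops; a separate minor point is to ensure that when several vertices of $\cal F$ share the same second coordinate, the tiebreaker ordering is harmless, which follows because the independence of $\cal F$ at that level prevents any two such vertices from sharing a variable.
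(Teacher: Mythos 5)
Your construction is correct, and it takes a genuinely different route from the paper. The paper proceeds by contradiction on a doubly-minimal counterexample (fewest airborne trees, then smallest airborne tree) and performs three local surgeries: pushing a whole airborne tree down one level, re-parenting a non-root vertex to split off a smaller airborne tree, or attaching an airborne root to a vertex one level below. You instead give a single-pass greedy re-embedding of $V(\mathcal F)$ into $\Canvas(G)$, and all the delicate points are handled correctly: processing in increasing order of original level is exactly what makes the $s^*$-constraint preserve, for each variable $x$, the order of the clauses containing $x$ (hence $\used_{\mathcal K}=\used_{\mathcal L}$); the observation that $\mathbf R^c(y)\neq\emptyset$ forces $\Var(y)\neq\emptyset$ gives injectivity; and the dichotomy "$s'(v)=s^*(v)>0$ versus $s'(v)>s^*(v)$" covers both ways level $s'(v)-1$ can be blocked, in each case exhibiting a $\Rel(G)$-neighbour one level down to serve as parent (possibly $y$ itself via its self-loop, which is a legitimate $\Canvas(G)$-edge). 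Your approach buys an explicit canonical grounded representative and avoids the extremal induction the paper itself calls fiddly; the paper's surgery argument is more local and is what lets the authors remark that the statement survives dropping finiteness of $G$, whereas your level-by-level greedy uses finiteness (or at least finite height plus a well-ordering within levels) to make the processing order legitimate. Since the proposition assumes $G$ finite, this costs you nothing here.
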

		
		The proof is quite straightforward, but it is somewhat fiddly to set up the induction. 
		
		\begin{myrem} Proposition~\ref{prop-equiv} holds true also if we remove the assumption that $G$ is finite. However, let us stress that we do not need this greater generality for the proof of Theorem~\ref{thm-main}.
		\end{myrem}
		
		If $\cal L$ is a landscape and $\tau\in \Trees(\cal L)$ is not grounded, then let us say that $\tau$ is \emph{airborne}, and let $\Trees^\vee(\cal L)$ be the set of all airborne trees.
		Clearly in order to prove Proposition~\ref{prop-equiv},  it is enough to prove the following lemma.
		
		\begin{lemma}
			Let $\cal M = (G, \mathbf R,  \pi)$ be a Moser--Tardos tuple with finite $G$. If $\cal L$ is a finalised $\cal M$-landscape with
			$|\Trees^\vee(\cal L)|>0$ then there exists an equivalent finalised landscape $\cal L'$ 
			such that  $|\Trees^\vee(\cal L')| < |\Trees^\vee(\cal L)|$.
		\end{lemma}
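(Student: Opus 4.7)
The plan is to fix an airborne tree $\tau$ with the smallest root level and then modify $\cal L$ by either an \emph{attachment} step (which adds one edge to $\cal F$ and directly reduces the airborne count) or a \emph{shift} step (which lowers $\tau$ by one level while preserving the used-bits function).

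Set $\ell := \min\{\ell(\tau') : \tau' \in \Trees^\vee(\cal L)\} \ge 1$ and pick an airborne $\tau$ with $\ell(\tau) = \ell$; write $\rho(\tau) = (x, \ell)$. By minimality no airborne tree has a vertex at level below $\ell$, so every vertex of $\cal F$ at level $\ell - 1$ lies in a grounded tree. If some $(y, \ell - 1) \in V(\cal F)$ satisfies $\Var_G(x) \cap \Var_G(y) \neq \emptyset$, I add the edge $(y, \ell - 1) \to (x, \ell)$ to $\cal F$. This is allowed since $(x, \ell)$ was a root, and it merges $\tau$ into the grounded tree containing $(y, \ell - 1)$; since $V(\cal F)$, $\prev$ and $\final$ do not change, $\used$ is preserved and $|\Trees^\vee|$ decreases by one.

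Otherwise, I shift $\tau$ down by one: replace each $(y, s) \in V(\tau)$ by $(y, s - 1)$, transporting $\prev$ and the intra-$\tau$ edges accordingly. If the resulting forest is independent at every level, then for every variable $x \in V(G)$ the level-ordering of the vertices $(y, s)$ with $x \in \Var_G(y)$ in $V(\cal F)$ is preserved: if two such vertices swapped their relative order, they would end up at a common new level sharing the variable $x$, contradicting independence. Hence $\used$ is preserved, $\tau$'s root is now at level $\ell - 1$, and although the airborne count is unchanged, the quantity $A(\cal L) := \sum_{\tau' \in \Trees^\vee(\cal L)} \ell(\tau')$ strictly drops by one. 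A strong induction on $A(\cal L)$ then finishes the proof: in the attachment case we are done, and in the shift case we apply the induction hypothesis to the shifted landscape.

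The main obstacle is verifying that the shift produces an independent forest at all new levels. At level $\ell - 1$ this follows from the failure of the attachment case. At a higher new level $s - 1$ (with $s > \ell$), a conflict would come from some $(w, s - 1) \in V(\cal F) \setminus V(\tau)$ adjacent in $\Rel(G)$ to a shifted $(z, s) \in V(\tau)$; I plan to rule such conflicts out by exploiting the parent $(u, s - 1) \in V(\tau)$ of $(z, s)$, which already sits at level $s - 1$ in $\cal F$, and invoking the independence of $\cal F$ at that level. This is the combinatorial heart of the argument; if a direct case analysis fails in some configuration, the fallback is to re-select $\tau$ among airborne trees with root level $\ell$ by also minimising, say, $|V(\tau)|$, which degenerates the problematic subtrees.
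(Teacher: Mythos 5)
Your attachment step (the paper's ``Step 3'') and your shift step at the bottom level are fine, but the independence check at the higher levels --- which you yourself flag as the combinatorial heart --- has a genuine gap, and the fix you sketch does not work. Suppose $(z,s)\in V(\tau)$ with $s>\ell$ has parent $(u,s-1)\in V(\tau)$, and some $(w,s-1)\in V(\cal F)\setminus V(\tau)$ satisfies $\Var_G(z)\cap\Var_G(w)\neq\emptyset$. Independence of $\cal F$ at level $s-1$ gives $\Var_G(u)\cap\Var_G(w)=\emptyset$, which says nothing about $z$ versus $w$; after the shift, $(z,s-1)$ and $(w,s-1)$ sit at the same level and share a variable, so the shifted forest is simply not independent. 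A concrete instance: $\tau$ consists of $(x,1)\to(z,2)$, a grounded component contains $(a,0)\to(w,1)$, with $\Var_G(x)\cap\Var_G(a)=\emptyset$ (so no attachment at the root) but $\Var_G(z)\cap\Var_G(w)\neq\emptyset$. Here $\tau$ is the unique airborne tree and already of minimal size, so your fallback of re-selecting $\tau$ by minimising $|V(\tau)|$ changes nothing.

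What is missing is an operation for exactly this blocked configuration, and the paper supplies one (its ``Step 2''): when the push of $\tau$ is blocked at a \emph{non-root} vertex $(z,s)$ by some $(w,s-1)\notin V(\tau)$ sharing a variable with $z$, delete the unique incoming edge of $(z,s)$ and add the edge $(w,s-1)\to(z,s)$. This grafts the subtree of $(z,s)$ onto the component of $(w,s-1)$ without changing $V(\cal F)$, $\prev$ or $\final$ (so equivalence is automatic) and strictly shrinks the airborne tree containing the old root. Correspondingly, the paper's induction is a double minimisation --- first over the number of airborne trees, then over the size of a smallest airborne tree --- whereas your single measure $A(\cal L)=\sum_{\tau'\in\Trees^\vee(\cal L)}\ell(\tau')$ does not decrease under this grafting (the root of $\tau$ stays at level $\ell$, and the grafted subtree joins a component whose root level is unchanged), so it cannot drive the induction. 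With the grafting step and this second induction parameter added, your argument becomes essentially the paper's proof.
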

		
		\newop{fly}{fly}
		\begin{proof}  
			Suppose by way of contradiction that the lemma is false. Let us first define $A_1$ to be the set of all finalised $\cal M$-landscapes which are not equivalent to a grounded one. Let $m_1:=\min\{ |\Trees^{\vee}(\cal L)|\colon \cal L \in A_1\}$ and let 
			$$
			A_2 := \{\cal L \in A_1\colon |\Trees^{\vee}(\cal L)|=m_1\}.
			$$
			
			Let $m_2 :=\min\{|V(\tau)|\colon \tau \in \Trees^{\vee}(\cal L) \text{ for some $\cal L\in A_2$}\}$ and let $\cal L=(\cal F, \prev,\final)$ belong to $A_2$ and $\tau\in \Trees^\vee(\cal L)$ be such that  $|V(\tau)|=m_2$. In other words, we pick a  counterexample $\cal L$ with the smallest number of airborne trees and, among all such $\cal L$, one that minimises the size of  an airborne tree~$\tau$.
			
			The proof has 3 steps/cases. 
			
			\part[label=\textbf{(Step 1)}] This step is applicable in the case that we can ``push $\tau$ down''. This is illustrated in Figure~\ref{fig-lala}. 
			
			\begin{figure}[h]
				\begin{subfigure}{1\textwidth}
					\centering
					\scalebox{0.4}{\input{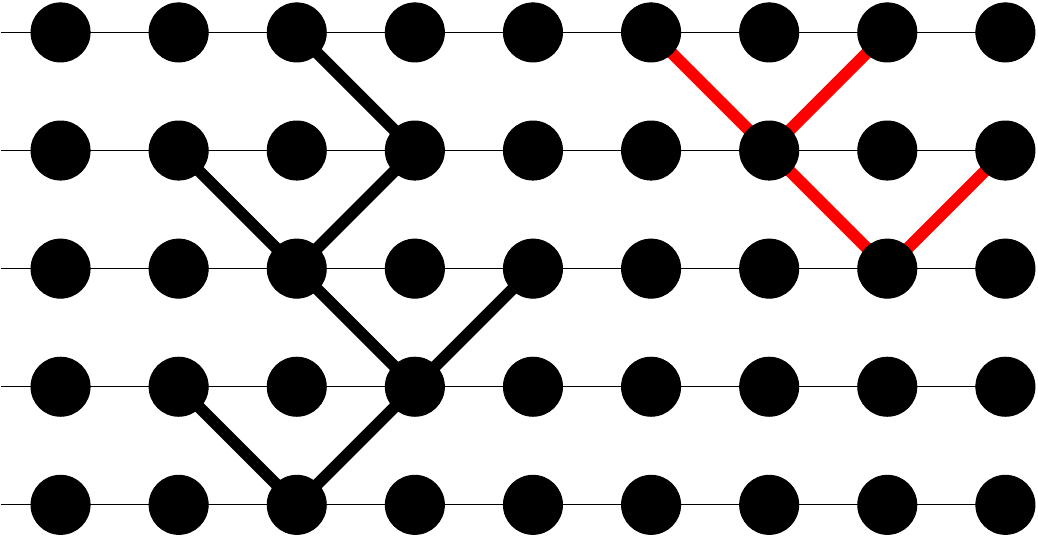_t}}
					\caption{Forest with two components, $\tau$ is red. The horizontal edges are the edges in $\Rel(G)$. We can push $\tau$ down
						one level because after pushing the vertices of the forest form independent sets at each level.}
				\end{subfigure}
				\newline
				\begin{subfigure}{1\textwidth}
					\centering
					\scalebox{0.4}{\input{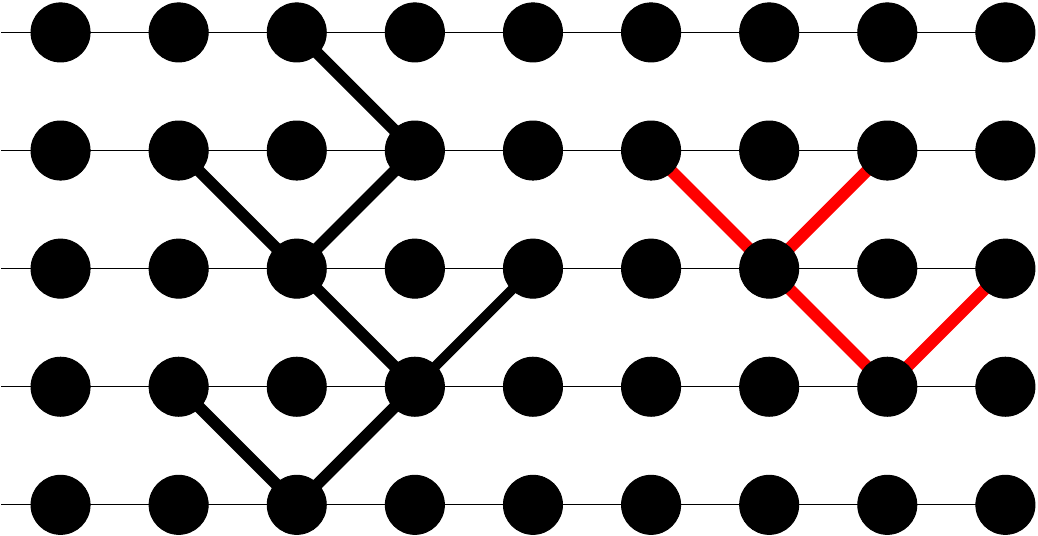_t}}
					\caption{The modified forest with $\tau$ pushed down by one level. We cannot push $\tau$ any further down because the vertices in the middle row would not form an independent set.}
				\end{subfigure}
				\caption{}\label{fig-lala}
			\end{figure}
			
			Formally, we say that $\tau$ \emph{can be pushed by one level} if 
			for all $(x,i)\in V(\tau)$ and $ (y,i-1)\in V(\cal F)\setminus 
			V(\tau)$ we have $\Var_G(x)\cap \Var_G(y)=\emptyset$. If this is the 
			case then we can define an independent $G$-forest $\cal F'$ with 
			\begin{align*}
				V(\cal F') &= \left(V(\cal F)\setminus V(\tau)\right) \cup \{(x,i-1)\colon (x,i)\in V(\tau)\},
				\\
				E(\cal F') &= \left(E(\cal F)\setminus E(\tau)\right) \cup \{((x,i-2),(y,i-1))\colon ((x,i-1),(y,i))\in E(\tau)\}.
			\end{align*}
			
			We define $\cal L'=(\cal F',\prev',\final)$, where $\prev'(x,i):=\prev(x,i)$ when $(x,i) \in V(\cal F)\setminus V(\tau)$ and $\prev'(x,i):= \prev(x,i+1)$ when $(x,i+1)\in V(\tau)$, with these two cases being exclusive since we can push $\tau$ down.
			
			The fact that $\used_{\cal L} = \used_{\cal L'}$ follows from the fact that, informally speaking we have not changed the order of any variables which appear in $\cal F$ and $\cal F'$. More precisely, recall that to define $\used_{\cal L}(x)$ we list those  vertices  $(y_1,i_1),\ldots, (y_t,i_t)$  of $V(\cal F)$  such that $x\in \Var(y_j)$, ordered by the second coordinate, and we let $\used_{\cal L}(x)$ be the sequence $\prev(y_1,i_1)(x),\ldots, \prev(y_t,i_t)(x),\final(x)$. 
			
			Now  in order to compute $\used_{\cal L'}(x)$ we get a list $(y_1, i_1'),\ldots, (y_t,i_t')$ in $V(\cal F')$, where each $i_m'$ is $i_m$ or $i_m-1$. Since no two of $i_1',\dots,i_t'$ are the same, we have $i_1'<\ldots < i_t'$ and the clauses $y_1,\dots,y_t$ come in the same order as for $\cal L$. This shows the desired equality $\used_{\cal L} = \used_{\cal L'}$.
			
			Thus after repeatedly passing to an equivalent landscape if necessary, we can assume that $\tau$ cannot be pushed down by one level. This means that there exist $(x,i)\in V(\tau)$ and $(y,i-1)\in V(\cal F)\setminus V(\tau)$ such that $\Var(x)\cap \Var(y)\neq \emptyset$. We take one such triple $(x,y,i)$ and do one of the following two steps.
			
			\item[\textbf{(Step 2)}] In this step let us assume that $x$ is not the root of $\tau$. In this case in order to define $\cal L'$ we do not modify $V(\cal F)$ at all, we only remove the unique incoming edge at $x$, and add the edge $((y,i-1), (x,i))$ to the forest (see Figure~\ref{fig-wew}).
			
			\begin{figure}[h]
				\centering
				\scalebox{0.4}{\input{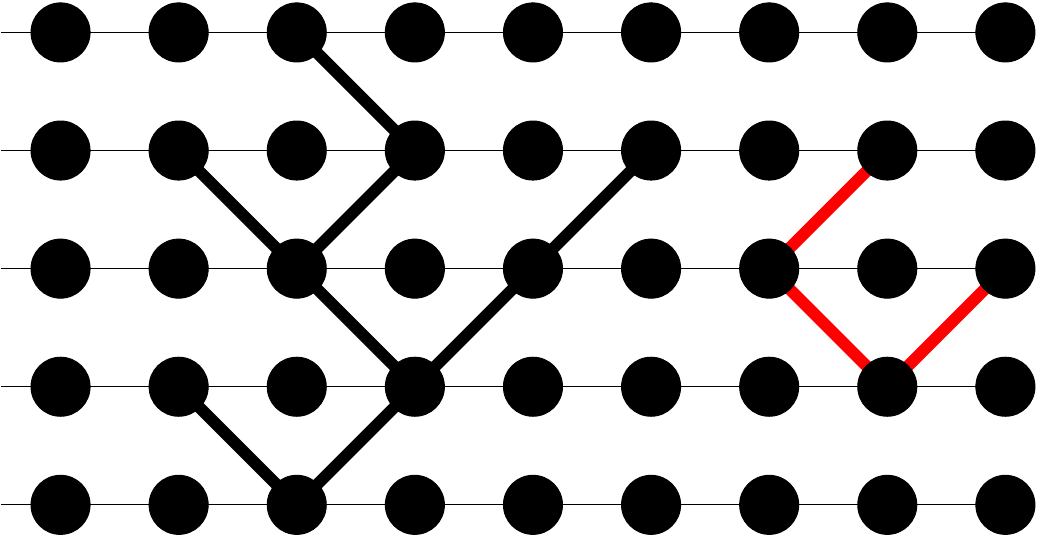_t}}
				\caption{The forest from Figure~\ref{fig-lala}(b), after the operation described in (Step 2)} 
				\label{fig-wew}
			\end{figure}
			Since we do not modify $V(\cal F)$, $\prev$ and $\final$, it is clear that $\cal L'$ is equivalent to $\cal L$. Also, the number of airborne trees is the same. This is a contradiction with the definition of $m_2$, since in $\cal L'$ we have an airborne tree with less than $m_2$ vertices.
			
			\item[\textbf{(Step 3)}] Now let us assume that $x$ is the root of $\tau$. This is illustrated in Figure~\ref{fig-pp}.
			
			\begin{figure}[h]
				\begin{subfigure}{1\textwidth}
					\centering
					\scalebox{0.4}{\input{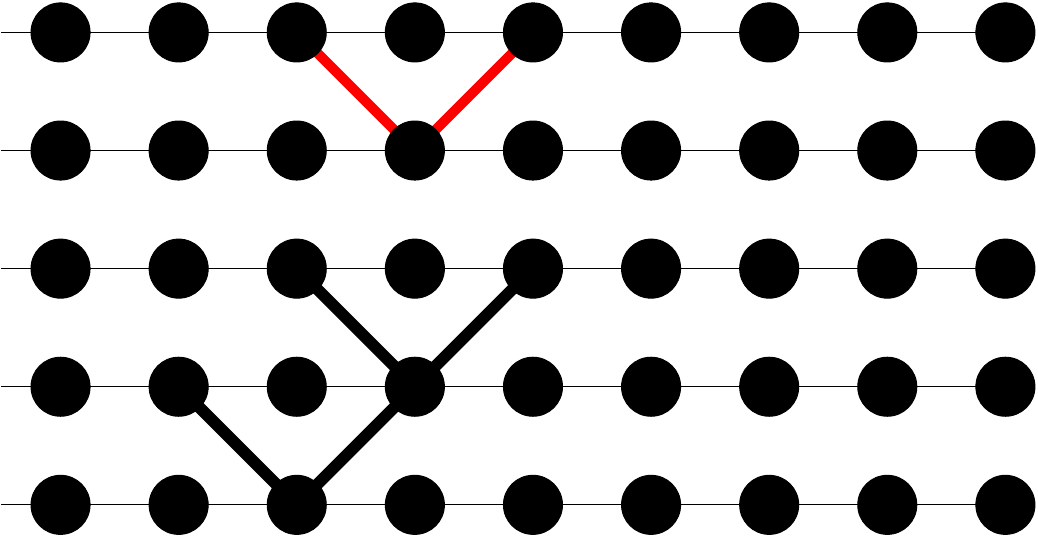_t}}
					\caption{Forest with two components, $\tau$ is red.  We cannot push $\tau$ down
						one level, because afterwards the vertices in the middle row would not form an independent set.}
				\end{subfigure}
				\newline
				\begin{subfigure}{1\textwidth}
					\centering
					\scalebox{0.4}{\input{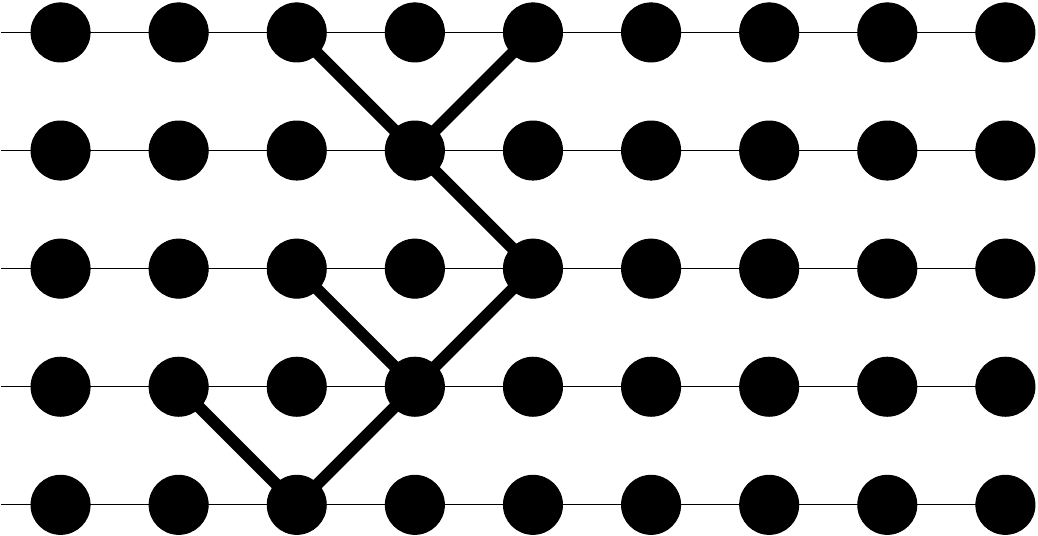_t}}
					\caption{The modified forest after the operation described in (Step 3), i.e.~we join the root of $\tau$ to another vertex of $\cal F$}
				\end{subfigure}
				\caption{}\label{fig-pp}
			\end{figure}
			
			In this case the only modification we make to define $\cal L'$ is adding the edge  $((y,i-1),(x,i))$ to the forest. As in the previous step, it is clear that $\cal L'$ is equivalent to $\cal L$. But this is a contradiction with the definition of $m_1$, because $\cal L'$ has one less airborne tree than $\cal L$. This final contradiction finishes the proof of the lemma, and hence also of Proposition~\ref{prop-equiv}.\qedhere
			\trap
		\end{proof}

		\subsection{Proof of Theorem~\ref{thm-main}}\label{subsec-mainproof}
		
		The assumption that $G\in \subexp(\nn,\eps,d)$ in Theorem~\ref{thm-main} will be used via the following lemma.

		\begin{lemma}\label{lem-growth} Let $G$ be a digraph, let $\eps>0$ and let us assume that $\nn\in\NN_+$ is such that for all $x\in V(G)$ we have $|N(x,3\nn)|\le (1+\eps)^{\nn}$. Let $h\colon V(G) \to \N$  be a bounded function, and let $y\in V(G)$ be such that $h(y) = \max_{x\in V(G)} h(x)$. Then there exists $r\in\{3,\ldots, 3\nn\}$ such that 
			\beq[jaba]
			\sum_{x\in N(y,r)} h(x)  \le (1+\eps)\cdot \sum_{x\in N(y,r-3)} h(x).
			\eeq
		\end{lemma}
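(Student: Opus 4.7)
The plan is to use a straightforward averaging/pigeonhole argument along a telescoping sequence of balls, restricting attention to radii that are multiples of $3$.

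First I would dispose of the degenerate case $h \equiv 0$: here both sides of \eqref{jaba} vanish for any $r$, so the statement holds trivially. (Note that since $h$ takes values in $\NN$ and is bounded, the maximum is attained, so $y$ exists; and if $h(y)=0$ then $h \equiv 0$.)

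Assuming $h(y) > 0$, I would define the weighted ball sums
$$
S_k := \sum_{x \in N(y,3k)} h(x), \qquad k = 0, 1, \ldots, \nn.
$$
Two observations drive the argument. On the one hand $S_0 = h(y)$ because $N(y,0)=\{y\}$. On the other hand, since $y$ maximises $h$, the growth assumption gives the global upper bound
$$
S_{\nn} \le h(y) \cdot |N(y,3\nn)| \le h(y)\cdot (1+\eps)^{\nn}.
$$

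Now I would argue by contradiction: suppose no $r \in \{3,\ldots,3\nn\}$ satisfies \eqref{jaba}. Then in particular, specialising to $r = 3k$ for each $k \in \{1,\ldots,\nn\}$, we get $S_k > (1+\eps)\, S_{k-1}$. Iterating this along the chain yields
$$
S_{\nn} > (1+\eps)^{\nn}\, S_0 = (1+\eps)^{\nn}\, h(y),
$$
which contradicts the upper bound from the previous paragraph. Hence some $r$ in the desired range must work. There is no real obstacle here; the only thing to be careful about is the degenerate case and the off-by-one bookkeeping on the exponent, which is why restricting to $r$'s that are multiples of $3$ (so that $r-3$ is also in range and the telescope has exactly $\nn$ steps) gives a clean contradiction with the subexponential-growth hypothesis of exponent $\nn$.
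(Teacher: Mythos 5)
Your proposal is correct and follows essentially the same route as the paper: assume no $r$ works, telescope the strict inequalities along the radii $3,6,\ldots,3\nn$ to get $\sum_{x\in N(y,3\nn)}h(x) > (1+\eps)^{\nn}h(y)$, and contradict the growth bound together with the maximality of $h(y)$. The extra handling of the case $h\equiv 0$ is harmless but unnecessary, since the strict inequality chain is already contradictory there.
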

		
		\begin{proof}
			Suppose that for all $r\in \{3,\ldots,3\nn\}$ the inequality \eqref{jaba} does not hold. Then 
			$$
			\sum_{x\in N(y,3\nn)} h(x) > 
			(1+\eps)\sum_{x\in N(y,3\nn-3)} h(x) >\ldots > (1+\eps)^{\nn} \sum_{x\in N(y,0)} h(x)= (1+\eps)^{\nn}\cdot h(y).
			$$
			
			Since $|N(y,3\nn)|\le (1+\eps)^{\nn}$, we deduce that for some $z\in N(y,3\nn)$ we have $h(z)>h(y)$, which contradicts the maximality of $h(y)$.
		\end{proof}

		Theorem~\ref{thm-main} is a consequence of the following theorem (the difference is essentially that in Theorem~\ref{thm-main} we only consider $h^\infty_{\cal M,\rnd}$). Indeed, for any fixed integer $m$, the events $\{\max_{x\in V(G)} h^k(x)>m\}$ are increasing with $k$ and their union over all $k\in\NN_+$ is the event $\{\sup_{x\in V(G)} h^\infty(x)>m\}$.
		
		\begin{theorem}\label{thm-main-analysis}
			Let $\cal M=(G,\mathbf R, \pi)$ be a Moser--Tardos tuple, let  $\be:=\be_{\cal M}$ as defined in~\eqref{eq:beta} and let $\De:= \maxdeg(\Rel(G))>0$. Suppose that integers $\nn,d\in \NN_+$ and reals $\eps,\delta>0$ satisfy that 
			\part $G\in \subexp(\nn,\eps,d)$,
			\part the partition $\pi$ is $3\nn$-sparse, and  
			\part it holds that
			\begin{equation}\label{eq:delta2}
				\be\le \frac{1}{(\me\De)^{1+\delta}\, b^{\eps d}}.
			\end{equation}
			\trap
			Then there is a constant $K$ which depends only on $b$, $\de$, $d$ and $|\pi|$ such that for all $m,k\in \NN_+$ we have  
			\begin{equation}\label{eq-wqeq}
				\Pr_{\rnd}\left(\max_{x\in V(G)} h^k_{\cal M,\rnd}(x)> m\right) \le \frac{K(m+1)^{|\pi|}} {(\me\De)^{\de m}}.
			\end{equation}
		\end{theorem}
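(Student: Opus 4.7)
The plan is to adapt the landscape-counting argument of Theorem~\ref{thm-main-classic} by restricting attention to a carefully chosen ball around a maximally-resampled vertex. Given $\rnd\in b^{\pi\times\NN}$ with $\max_{x\in V(G)}h^k_{\cal M,\rnd}(x)\ge m$, I would first pick a canonical vertex $y$ achieving this maximum and then, by applying Lemma~\ref{lem-growth} to $h^k$, choose a radius $r\in\{3,\dots,3\nn\}$ satisfying
\[
\sum_{x\in N(y,r)}h^k(x)\le(1+\eps)\sum_{x\in N(y,r-3)}h^k(x).
\]
Setting $U:=N(y,r)$ and using that $r\le 3\nn$ together with the $3\nn$-sparseness of $\pi$, the set $U$ is $\pi$-unique, so the restricted tuple $\cal M_U:=\Res_U(\cal M)$ has a singleton partition on the vertex set $\pi$, and the restricted landscape $\cal L_U:=\Res_U(\cal L^k_{\cal M,\rnd})$ can be replaced, via Proposition~\ref{prop-equiv}, by an equivalent grounded finalised landscape $\cal K_U$.

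Next I would carry out the counting step. By Lemma~\ref{lem-restrict}, for every $x\in U^\circ$ (a set containing $N(y,r-3)$ and hence $y$), the sequence $\used_{\cal K_U}(S_\pi(x))$ coincides with the bits of $\rnd(S_\pi(x),\cdot)$ consumed at $x$ by the algorithm. Consequently, the number of $\rnd_k\in b^{\pi\times k}$ producing a fixed canonical witness $(y,r,\cal K_U)$ is at most $b^{k|\pi|-H_U}$, where $H_U:=\sum_{\alpha\in S_\pi(U^\circ)}\len\bigl(\used_{\cal K_U}(\alpha)\bigr)\ge h^k(y)\ge m$. A key observation is that $M:=|V(\cal K_U)|\ge h^k(y)-1\ge m-1$, because each of the $h^k(y)-1$ resamplings of $y$ arises from a distinct clause $(z,i)\in V(\cal F)$ with $z\in\Cl(y)\subset U$. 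Combining this lower bound on $M$ with Corollary~\ref{cory-count}, the growth inequality $\varcount(\cal K_U)-H_U\le(|\pi|-|U^\circ|)+\eps H_U$ following from the choice of $r$, and the assumption $\be\le(\me\De)^{-(1+\delta)}b^{-\eps d}$ (so that the factor $b^{-\eps d}$ cancels the boundary contribution $b^{\eps d M}$), each landscape of size $M$ should contribute at most $(M+1)^{|\pi|-1}(\me\De)^{-\delta M}$ up to a constant depending only on $b,\eps,|\pi|$.

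The main obstacle is handling the sum over the vertex $y$: a naive union bound would introduce a spurious $|V(G)|$ factor. Following the outline in the preceding subsection, this is resolved by grouping witnesses according to the $\pi$-preserving isomorphism class of $(\cal M_U,\cal K_U)$, of which there are only boundedly many as a function of $\nn,d,b,|\pi|$, combined with the mutual exclusivity of the canonical-witness events across different $\rnd$'s. Summing the per-landscape contribution over $M\ge m-1$ should then yield the desired bound $K(m+1)^{|\pi|}/(\me\De)^{\delta m}$ with $K$ depending only on $b,\delta,d,|\pi|$.
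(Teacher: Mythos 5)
Your proposal follows essentially the same route as the paper's proof: choose a vertex $y$ maximising $h^k$, use Lemma~\ref{lem-growth} to find a radius $r\in\{3,\dots,3\nn\}$ with small boundary growth, restrict to the $\pi$-unique ball $U=N(y,r)$ via $\Res_U$, ground the restricted landscape with Proposition~\ref{prop-equiv}, and then run the counting of Theorem~\ref{thm-main-classic} on the quotient vertex set $\pi$, so that the union bound ranges over boundedly many restricted tuples and landscapes rather than over vertices of $G$. The bookkeeping (recovering the used bits on $N(y,r-3)$ via Lemma~\ref{lem-restrict}, bounding the complement count by $b^{k|\pi|-H_U}$, and cancelling $b^{\eps H_U}$ against $\be^M\le(\me\De)^{-(1+\delta)M}b^{-\eps dM}$) matches the paper's computation.
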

		
		\begin{proof}
			Take any  $k\in \NN_+$. We start by associating to each $\rnd\in b^{\pi\times k}$ a Moser--Tardos tuple $\cal N_{\rnd}$ and an $\cal N_{\rnd}$-landscape $\cal K_{\rnd}$, in such a way that some of the random bits in $\rnd$ can be recovered from $\cal K_{\rnd}$. We do it as follows.
			
			Let us fix $\rnd\in b^{\pi\times k}$. Let $h := h^k_{\cal M,\rnd}$. We recall that $h(x)$ is the total number of resamplings of the variable $x$ which the algorithm makes on the input $(\cal M,\rnd)$  when defining $\MT^0,\ldots, \MT^{k-1}$. 
			
			Fix one point $y=y_{\rnd}\in V(G)$ where $h$ attains its maximum. Now by Lemma~\ref{lem-growth} we obtain $r=r_{\rnd}$ in $\{3,\ldots, 3\nn\}$ such that 
			$$
			\sum_{x\in N_G(y,r)} h(x)  \le (1+\eps)\cdot \sum_{x\in N_G(y,r-3)} h(x).
			$$
			From here we deduce that
			\begin{equation}\label{eq-ssg}
				\sum_{x\in N_G(y,r-3)} h(x)\ge \frac1{1+\eps}\sum_{x\in N_G(y,r)} h(x)= (1-\eta) \sum_{x\in N_G(y,r)} h(x),
			\end{equation}
			where we define $\eta:=\eps/(1+\eps)$. Clearly, $0<\eta<\min\{\eps,1\}$.
			
			Note that $N_G(y,r)$ is $\pi$-unique since $\pi$ is $3\nn$-sparse. We let $\cal N_{\rnd}$ be the Moser--Tardos tuple $\Res_{N_G(y,r)}(\cal M)$, and we let $\cal K_{\rnd}$ be a grounded finalised  $\cal N_{\rnd}$-landscape equivalent to $\Res_{N_G(y,r)}(\cal L^k_{\cal M, \rnd})$. The fact that we can take $\cal K_{\rnd}$ to be grounded follows from Proposition~\ref{prop-equiv}.
			
			We will shortly show that for all $m\in \NN$ we have 
			\begin{equation}\label{eq-willdo}
				\Pr_{\rnd}\left( |V(\cal K_{\rnd})| \ge m\right) \le \frac{K\cdot (m+1)^{|\pi|}} {(\me\De)^{\de m}},
			\end{equation}
			where $K$ depends only on $b$, $\de$, $d$ and $|\pi|$.  Let us first explain how this is enough to establish~\eqref{eq-wqeq}.  
			
			Recall that we fixed $y=y_{\rnd}$ to be a vertex of $G$ that maximises~$h$. Note that 
			if $x\in N_G(y,r-3)$, then every clause containing $x$ lies within distance at most $r-2$ from $y$, and every variable in such a clause lies within distance at most $r-1$ from \(y\); hence the hypothesis of Lemma~\ref{lem-restrict} holds inside $N_G(y,r)$.
			Thus for $x\in N_G(y,r-3)$, by Lemma~\ref{lem-equal-used}, we also have
			\begin{equation}\label{sp39}
				\used_{\cal K_{\rnd}}(S_\pi(x)) = \used^k_{\cal M,\rnd}(x).
			\end{equation}
			Since $r\ge 3$, we deduce in particular that  
			$$
			|V(\cal K_{\rnd})| +1 \ge \len(\used_{\cal K_{\rnd}}(S_\pi(y)))  = \len(\used^k_{\cal M,\rnd}(y)) = h(y),$$
			where $+1$ accounts for the last symbol of $\used_{\cal K_{\rnd}}(S_\pi(y))$ coming from the $\final$-part of $\cal K_{\rnd}$.
			Thus we have
			\begin{align*}
				\Pr_{\rnd}\left(\max_{x\in V(G)} h(x)> m\right) 
				= \Pr_{\rnd}\left(h(y)> m\right)
				& \le \Pr_{\rnd}\left(|V(\cal K_{\rnd})| \ge m\right),
			\end{align*}
			which establishes~\eqref{eq-wqeq} when~\eqref{eq-willdo} holds. Thus the rest of the proof is devoted to establishing~\eqref{eq-willdo}.
			
			For $m,v\in \NN$, let
			$$
			P_{m,v}:= \Pr_{\rnd} \left(|V(\cal K_{\rnd})| = m,\, \varcount(\cal K_{\rnd})=v\right),
			$$
			and let 
			$$
			P_m :=  \Pr_{\rnd} \left(|V(\cal K_{\rnd})| = m\right).
			$$
			
			
			For $\al \in \pi$ such that $\al = S_\pi(x)$ for some  $x\in N_G(y,r)$, we have that $\len(\used_{\cal K_{\rnd}}(\al)) \le h(x)$, and if furthermore $x\in N_G(y,r-3)$ then $\len(\used_{\cal K_{\rnd}}(\al)) = h(x)$. Thus the inequality~\eqref{eq-ssg} shows that  
			$$
			\len_{S_\pi(N_G(y,r-3))}(\used_{\cal K_{\rnd}})+|\pi|>(1-\eta)\len(\used_{\cal K_{\rnd}}),
			$$
			where the term $|\pi|$ is a strict upper bound on the total contribution from the  elements of $\pi\setminus S_\pi(N_G(y,r))$ (whose streams contribute only one bit each).
			We define $\C_{\rnd}\colon \pi \to b^{\oplus \NN}$ as $\C_{\rnd}(\al) := \unused^k_{\cal M,\rnd}(x)$ 
			when $\al = S_\pi(x)$ for some $x \in N_G(y,r-3)$, and $\C_{\rnd}(\al): = \rnd_k(\al,\ast)$ otherwise. We note that 
			$$
			\len(\C_{\rnd}(\al)) + \len(\used_{\cal K_{\rnd}}(\al)) =k
			$$
			for all $\alpha\in S_\pi(N_G(y,r-3))$, and $\len(\C_{\rnd}(\al)) = k$ for all remaining $\al\in\pi$. 
			
			Note that, because of~\eqref{sp39}, we can recover $\rnd_k$ from the pair $(\used_{\cal K_{\rnd}}, \C_{\rnd})$,
			and hence also from the triple $(\cal N_{\rnd}, \cal K_{\rnd}, \C_{\rnd})$. We deduce that 
			\begin{align*}
				P_{m,v} \le \frac{1}{b^{k|\pi|}}\cdot |\{&(\cal N, \cal K,\C)\colon \text{$\cal N$ is a Moser--Tardos tuple such that $\beta_{\cal N}\le \beta$,}
				\\
				&\text{the underlying digraph $G_{\cal N}$ has vertex set $\pi$,}
				\notag
				\\
				&\text{$\maxdeg(\Rel(G_{\cal N}))\le \De$, $\maxoutdeg(G_{\cal N})\le d$,}
				\notag
				\\
				&\text{$\cal K$ is a grounded finalised $\cal N$-landscape with }
				|V(\cal K)| = m\text{ and } \varcount(\cal K)=v,  
				\notag
				\\
				&\text{$\cal C\colon \pi \to b^{\oplus\NN}$ is $k$-complementary to $\used_{\cal K}$ on a set $Y\subset \pi$ such that}
				\notag
				\\
				&\len_Y(\used_{\cal K}) +|\pi|>(1-\eta)\len(\used_{\cal K}), \text{ and }
				\notag
				\\
				&\len(\cal C(\al)) = k \text{ for } \al\in \pi\setminus Y \}|.
				\notag
			\end{align*}
			
			Let us estimate the number of the triples $(\cal N, \cal K, \C)$ which are as described above.
			
			The Moser--Tardos tuple $\cal N$ is given by a digraph with $\maxoutdeg$ at most $d$, whose vertex set is $\pi$, and a local rule on it. Rather roughly, the number of possibilities for the digraph is at most $(\sum_{j=0}^d \binom{|\pi|}{j})^{|\pi|}\le (|\pi|+1)^{d|\pi|}$. Also, the number of possibilities for the local rule on a digraph with  $|\pi|$ vertices is at most $2^{b^d\cdot |\pi|}$. Thus all in all there are at most 
			$$
			(|\pi|+1)^{d|\pi|} \cdot 2^{b^d\cdot |\pi|} = \left((|\pi|+1)^d2^{b^d}\right)^{|\pi|}
			$$
			possibilities for $\cal N$.

			Let us estimate the number of possibilities for the pair $(\cal K,\cal C)$ when the Moser--Tardos tuple $\cal N$ is fixed.

			By Lemma~\ref{lem-count-dtrees} and Corollary~\ref{cory-count}, there are at most
			$$
			(m+1)^{|\pi|-1}(\me\De)^m \cdot 
			\beta_{\cal N}^m\cdot b^{v}\le 
			(m+1)^{|\pi|-1}(\me\De)^m \cdot 
			\beta^m\cdot b^{v}
			$$
			possibilities for the grounded finalised $\cal N$-landscape~$\cal K$. By~\eqref{eq:delta2} and $\eta<\eps$, 
			this is at most
			\begin{equation}\label{wgf234}
				(m+1)^{|\pi|-1}\cdot \frac{1}{(\me\De)^{\delta m}} \cdot b^{v -\eta  d m}.
			\end{equation}
			Since $v \le |\pi|+dm$, we can bound~\eqref{wgf234} from above by 
			$$
			(m+1)^{|\pi|-1}\cdot \frac{1}{(\me\De)^{\delta m}} \cdot b^{(1-\eta)v} \cdot b^{\eta|\pi|}.
			$$
			
			Next, let us count the possibilities for $\C$. There are $2^{|\pi|}$  possibilities for $Y$. For $\alpha\in \pi\setminus Y$ we have $\len(\C(\alpha)) =k$, and for $\alpha\in Y$ we have $\len(\C(\alpha)) = k-\len(\used_{\cal K}(\alpha))$. Thus the choice of $Y$ determines $\len(\C(\alpha))$ for all $\alpha\in \pi$. Furthermore, we have
			$$
			\len(\C) = \sum_{y\in \pi} \len(\C(y)) =k|\pi| - \len_Y(\used_{\cal K})
			$$
			and $\len_Y(\used_{\cal K})+|\pi|>(1-\eta)\len(\used_{\cal K})$, which by Lemma~\ref{lem-varcount} is equal to $(1-\eta)v$. 
			Therefore we have 
			$$
			\len(\C) \le 
			(k+1)|\pi|- (1-\eta)v.
			$$
			Now, Remark~\ref{rem-lengths} shows that  there are at most 
			$$
			2^{|\pi|} \cdot  b^{(k+1)|\pi|-(1-\eta)v}   
			$$
			possibilities for $\C$.  
			
			Thus we see that 
			\begin{align*}
				P_{m,v} &\le ((|\pi|+1)^d\, 2^{b^d}\, b)^{|\pi|} \cdot   b^{\eta|\pi|} \cdot (m+1)^{|\pi|-1}\, \frac{1}{(\me\De)^{\de m}}
				\cdot 2^{|\pi|}.
			\end{align*}
			Since this bound does not depend on $v$, and $P_{m,v}=0$ when $v>dm+|\pi|$, we can estimate $P_m$ from above by multiplying the above bound by $dm+|\pi|+1\le (d+|\pi|)(m+1)$. Furthermore we bound $b^{\eta|\pi|}< b^{|\pi|}$. So we deduce that 
			$$
			P_m \le (d+|\pi|)\cdot ((|\pi|+1)^d\,2^{b^d+1}\,b^{2})^{|\pi|} \cdot    (m+1)^{|\pi|}\, \frac{1}{(\me\De)^{\de m}}.
			$$
			Recall that for $x\in (0,1)$ and $t\in \NN_+$ we have 
			$$
			\left(\frac{1}{1-x}\right)^t = \sum_{i=0}^\infty \frac{(i+t-1)\cdot \ldots\cdot (i+1)}{(t-1)!}\, x^i.
			$$
			It follows that  
			\begin{align*}
				\sum_{j\ge m} (j+1)^{t-1} x^j &= x^m \sum_{i\ge 0} (i+m+1)^{t-1} x^i 
				\\
				&\le (m+1)^{t-1} x^m \sum_{i\ge 0} (i+1)^{t-1} x^i 
				\\
				&\le (m+1)^{t-1} x^m \frac{(t-1)!}{(1-x)^t}.
			\end{align*}
			Using $t :=|\pi|+1$ and $x:= (\me\De)^{-\de}$ gives us the inequality 
			$$
			\sum_{j\ge m} (j+1)^{|\pi|} \cdot  \frac{1}{(\me\De)^{\de j}} \le \frac{(m+1)^{|\pi|}}{(\me\De)^{\de m}} \cdot \frac{|\pi|!}{(1-(\me\De)^{-\de})^{|\pi|+1}}.
			$$
			Therefore, we deduce using $\De\ge 1$ that
			$$
			\Pr_{\rnd} \left(|V(\cal K_{\rnd})| \ge m\right) = \sum_{j \ge m} P_j \le \frac{ (d+|\pi|)((|\pi|+1)^d\,2^{b^d+1}\,b^{2})^{|\pi|}(|\pi|!)}{(1-\me^{-\de})^{|\pi|+1}}\cdot  \frac{(m+1)^{|\pi|}}{(\me\De)^{\de m}}.
			$$
			Note that the first factor in the right-hand side does not depend on $m$ (nor on $k$).  This establishes~\eqref{eq-willdo} and finishes the proof.
		\end{proof}
		\section{Borel version of the Lov\'asz Local Lemma}\label{sec-borel}
		
		In this section, we derive a Borel version of the Lov\'asz Local Lemma (Theorem~\ref{thm-main-borel} here). A derivation of Theorem~\ref{thm-main-borel} from Theorem~\ref{thm-main} is rather routine for experts; however, we present some details for the sake of completeness.

		Let $G= (V,\mathcal B,E)$  be a \emph{Borel digraph}, that is, $(V,E)$ is a digraph, $(V,\mathcal B)$ is a standard Borel space and the edge set $E$ is Borel as a subset of $V\times V$. (We refer the reader to~\cite{Pikhurko21bcc} for all omitted 
		definitions.)   As always throughout the article, we assume that $\maxdeg(G)<\infty$. Also, for $r\in \NN$  and Borel functions $\ell$ and $\ell'$ defined on $V$, we say that  $\ell'$ is an \emph{$r$-local function} of $(G,\ell)$ if the value of $\ell'$ on any vertex $x\in V$ depends only on the isomorphism type of the restriction of the labelled rooted digraph $(G,\ell,x)$ to the $r$-ball $N_G(x,r)$.
		
		We say that a finite partition $\pi$ of $V$ is \emph{Borel} if all elements of $\pi$ are Borel as subsets of~$V$. If $\pi$ is Borel then the map $S_{\pi}\colon V \to \pi$ is Borel.
		
		\begin{lemma}\label{lm:BorelPartition} Let $G=(V,\mathcal B,E)$ be a Borel digraph with all degrees bounded.  Then, for any $r\in \NN$ there exists a  Borel finite partition of $V$ which is $r$-sparse.
		\end{lemma}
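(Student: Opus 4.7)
The plan is to reduce the existence of an $r$-sparse Borel partition to the existence of a Borel proper vertex colouring of an auxiliary bounded-degree Borel graph $H$, and then invoke the classical Kechris--Solecki--Todor\v{c}evi\'c theorem on Borel colourings of bounded-degree Borel graphs.

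More concretely, I would first define $H$ to be the graph on $V$ in which two distinct vertices $x,y$ are adjacent if and only if their $G$-graph-distance satisfies $1\le d_G(x,y)\le 2r$. By the equivalent formulation of $r$-sparseness given just after the definition, a partition $\pi$ of $V$ is $r$-sparse exactly when no two distinct vertices of the same part are $H$-adjacent; that is, $\pi$ is $r$-sparse precisely when the map $S_\pi\colon V\to \pi$ is a proper vertex colouring of $H$. So constructing a Borel $r$-sparse finite partition reduces to exhibiting a Borel proper vertex colouring of $H$ using finitely many colours.

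Next, I would verify the two properties of $H$ needed for the colouring result. First, $H$ has bounded degree: since $G$ has $\maxdeg(G)\le d$ for some $d<\infty$, a standard count gives $|N_G(x,2r)|\le 1+ d\sum_{i=0}^{2r-1}(d-1)^i < d^{2r+1}$, and hence every vertex of $H$ has degree strictly less than $d^{2r+1}$. Second, $H$ is Borel, because its edge set can be written as
\[
E(H)=\bigcup_{k=1}^{2r}\bigl\{(x,y)\in V\times V\colon d_G(x,y)=k\bigr\}\setminus\{(x,x)\colon x\in V\},
\]
and each set on the right is Borel in $V\times V$: the set of pairs at $G$-distance exactly $k$ is definable by a first-order formula involving finitely many applications of the Borel relation $E(G)$, and the diagonal of the standard Borel space $V$ is Borel. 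So $H$ is a Borel graph of bounded finite degree.

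Finally, I would invoke the Kechris--Solecki--Todor\v{c}evi\'c theorem (Proposition~4.5 of their paper), which states that every Borel graph of maximum degree at most $D$ admits a Borel proper vertex colouring with at most $D+1$ colours. Applying this to $H$ with $D:=d^{2r+1}-1$ yields a Borel map $c\colon V\to\{0,1,\ldots,D\}$ which is a proper colouring of $H$; the preimages $\pi:=\{c^{-1}(0),\ldots,c^{-1}(D)\}$ form a Borel finite partition of $V$ into at most $D+1$ parts, and by the reduction above this partition is $r$-sparse, as required. There is no serious obstacle here: the only subtle point is to verify that the auxiliary distance-at-most-$2r$ graph $H$ is Borel, which follows routinely from bounded degree and the fact that $E(G)$ is Borel.
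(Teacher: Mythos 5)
Your proof is correct, and it follows the same basic strategy as the paper (reduce to a Kechris--Solecki--Todor\v{c}evi\'c-type result for bounded-degree Borel graphs), but it unpacks one more layer. The paper simply passes to the underlying undirected loopless graph $\tilde G$ and cites a result that \emph{directly} provides a Borel $r$-sparse finite partition of any bounded-degree Borel graph (\cite[Proposition~4.6]{KechrisSoleckiTodorcevic99}, or \cite[Corollary~5.9]{Pikhurko21bcc}). You instead construct the distance-at-most-$2r$ power graph $H$, observe that $r$-sparse partitions are exactly proper colourings of $H$, and invoke the Borel $(D+1)$-colouring theorem for Borel graphs of maximum degree $D$. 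This is essentially the standard proof of the result the paper black-boxes, so your argument is more self-contained at the cost of having to verify that $H$ is itself a Borel graph. One point where your justification is thinner than it should be: you assert that the relation $d_G(x,y)=k$ is Borel because it is ``definable by a first-order formula'' using $E(G)$. An existential quantifier over the uncountable space $V$ only yields an analytic set in general; what saves you is that $G$ is locally finite (bounded degree), so the witnesses for a path of length $\le 2r$ from $x$ range over the finite set $N_G(x,2r)$, which can be enumerated in a Borel fashion (e.g.\ via the Luzin--Novikov theorem, or by an argument in the spirit of Lemma~\ref{lm:5.7}). With that caveat spelled out, the proof is complete.
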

		\begin{proof} 
			Consider  the unordered graph $\tilde G=(V,\tilde E)$ where 
			\beq\label{eq:TildeE}
			\tilde E:=( E\cup \{(y,x)\colon (x,y)\in E\})\setminus \{(x,x)\colon x\in V\},
			\eeq
			that is, we ignore the loops and the orientation of the arcs of $G$. The edge set $\tilde E$ is a Borel subset of $V^2$ as a Boolean
			combination of Borel sets. (For example, the diagonal $\{(x,x)\colon x\in V\}$ is a closed and thus Borel subset of $V^2$.) 
			By~\cite[Proposition~4.6]{KechrisSoleckiTodorcevic99} (or by \cite[Corollary 5.9]{Pikhurko21bcc}),
			the graph $\tilde G$, which is Borel and of bounded degree, admits a Borel $r$-sparse finite partition $\pi$. By definition,
			the partition $\pi$ is also $r$-sparse with respect to the digraph~$G$.
		\end{proof}

		Next, we are going to define when a local rule  $\mathbf R$ is Borel. 
		Let us fix a Borel $2$-sparse partition $\pi$ of~$V(G)$. The $2$-sparseness ensures that, for each $x\in V$, the labelling $S_\pi:V\to \pi$ is injective on $\Var(x)$. 
		Thus, in order to specify the rule $\mathbf R(x)$ at $x\in V$, it is enough to specify the set $\{\psi\in b^{\pi}\colon  (\psi\circ S_\pi)\restriction \Var(x)\in\mathbf R(x)\}$ since this set uniquely determines $\mathbf R(x)$.  This gives a natural (over-redundant) encoding of $\mathbf R$ by a function $\ell_{\mathbf R}:V\to \Pow(b^\pi)$. We say that the local rule $\mathbf R$  is \emph{Borel} if the function $\ell_{\mathbf R}$ is Borel. 
		
		Let us argue that this definition (whether $\mathbf R$ is Borel or not) does not depend on the choice of~$\pi$.
		Let $\pi'$ be another Borel $2$-sparse partition, with the corresponding encoding $\ell'_{\mathbf R}$. We can compute $\ell'_{\mathbf R}$ at any $x\in V$ from $\ell_{\mathbf R}(x)$  and the restriction of $(S_\pi,S_{\pi'}):V\to \pi\times \pi'$ to the $1$-ball $N_G(x,1)$ around $x$. This means that $\ell'_{\mathbf R}$ is a 1-local function of $(G,(\ell_{\mathbf R},S_\pi,S_{\pi'}))$. Now the fact that $\ell'_{\mathbf R}$ is Borel if $\ell_{\mathbf R}$ is Borel follows from the following general result.
		
		\begin{lemma}\label{lm:5.7} Let $G=(V,\mathcal B,E)$ be a Borel digraph with bounded degrees,  $r\in\NN$, and $\ell$ be a Borel vertex labelling of $G$ that assumes countably many values. Let $\ell'$ be an $r$-local function of $(G,\ell)$. Then the labelling $\ell'$ is Borel.
		\end{lemma}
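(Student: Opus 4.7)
The plan is to reduce the claim to showing that, for every isomorphism type $T$ of a finite labelled rooted digraph, the set $A_T:=\{x\in V:(G_{|N_G(x,r)},\ell_{|N_G(x,r)},x)\cong T\}$ is Borel. Since $\maxdeg(G)\le d$ for some $d$, the ball $N_G(x,r)$ has size at most $M:=1+d+d(d-1)+\dots+d(d-1)^{r-1}$, and since $\ell$ has countably many values, there are only countably many possible types $T$. As $\ell'(x)$ depends only on the type at $x$, the range of $\ell'$ is countable and each fibre of $\ell'$ is a countable union of sets of the form $A_T$; hence it suffices to prove Borelness of each $A_T$.

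To handle the $A_T$'s, I would first fix a Borel injection $V\to\RR$, giving a Borel linear order $\le$ on $V$. Next, by induction on $r$, I would show that $\{(x,y):y\in N_G(x,r)\}$ is a Borel subset of $V\times V$: the base case $r=0$ is the diagonal, and in the inductive step the set $\{(x,y):y\in N_G(x,r{+}1)\}$ equals $\{(x,y):y\in N_G(x,r)\}$ together with the projection of
\[
\{(x,y,z):z\in N_G(x,r),\ (z,y)\in E\text{ or }(y,z)\in E\}
\]
to $(x,y)$. This projection is finite-to-one since $|N_G(x,r)|\le M$, so by the Lusin--Novikov theorem its image is Borel. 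The same kind of argument shows that $\{x:|N_G(x,r)|=n\}$ is Borel for each $n\le M$, and that on this set the map $x\mapsto y^x_k$ sending $x$ to the $k$-th smallest element of $N_G(x,r)$ in $\le$ is a Borel function (its graph is Borel and it is a function, so Borelness follows).

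With this Borel enumeration $x\mapsto(y^x_1,\dots,y^x_n)$ of $N_G(x,r)$ in hand, membership of $x$ in $A_T$ becomes a finite Boolean combination of Borel conditions on $x$: that $|N_G(x,r)|$ equals $|V(T)|$; that for each pair $(i,j)$ the truth of $(y^x_i,y^x_j)\in E$ matches the corresponding edge of $T$; that $\ell(y^x_i)$ equals the label of the corresponding vertex of $T$; and that the index $k$ with $y^x_k=x$ corresponds to the root of $T$. Each of these is Borel (edges and labels are Borel by assumption, and the $y^x_i$ are Borel functions of $x$), so $A_T$ is Borel and the proof is complete. The main technical obstacle is the Borel enumeration of $N_G(x,r)$, which is where the bounded-degree hypothesis enters essentially, through the finite-to-one projection argument underpinning the use of Lusin--Novikov.
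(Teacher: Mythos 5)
Your proof is correct, but it takes a genuinely different route from the paper. The paper treats this as a folklore fact: it reduces the digraph case to the unordered-graph case by passing to the bipartite Borel graph $H$ on $V\times\{0,1\}$ (with edges $\{(x,0),(x,1)\}$ and $\{(x,0),(y,1)\}$ for $(x,y)\in E$), observes that $x\mapsto\ell'(x)$ becomes a $2r$-local function of $(H,\ell_H)$, and then cites the known result for Borel graphs (Lemma~5.17 in Pikhurko's survey). You instead prove the statement from scratch: you fix a Borel linear order on $V$, show by induction on the radius (via finite-to-one projections and Lusin--Novikov) that the ball relation $\{(x,y):y\in N_G(x,r)\}$ is Borel, extract a Borel enumeration of each ball, and then write each type-set $A_T$ as a finite Boolean combination of Borel conditions; since there are only countably many types, Borelness of $\ell'$ follows. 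Your argument is essentially what underlies the cited folklore result, so it buys a self-contained proof that handles digraphs directly without the doubling trick, at the cost of redoing the descriptive-set-theoretic legwork. Two minor points worth tightening: in the definition of $A_T$ you should either fix an enumeration of $V(T)$ and take a finite disjunction over the bijections between $\{y^x_1,\dots,y^x_n\}$ and $V(T)$, or note that matching against one fixed correspondence per representative of the iso-class suffices after this disjunction; and the claim that $x\mapsto y^x_k$ has Borel graph needs the same counting/projection argument you used for $|N_G(x,r)|=n$, which you wave at but do not spell out. Neither is a real gap.
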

		\begin{proof}[Proof sketch] This is a folklore result whose formal proof for unordered graphs can be found in~\cite[Lemma 5.17]{Pikhurko21bcc}. It can be applied to our digraph $G$ as follows. Fix a  Borel 2-sparse partition $\pi$ of $V(G)$. Let $\tilde G$ be the underlying unordered graph of $G$. Let $\tau:V(G)\to \pi\times 2^{\pi}\times 2^{\pi}$ be the labelling that maps a vertex $x$ to the index $S_\pi(x)$ of its part as well as to the subsets of $\pi$ corresponding to the out- and in-neighbourhoods of $x$ in $G$. The function $\tau$ can be shown to be Borel by using
the Lusin-Novikov Uniformisation Theorem (see e.g.\ the discussion in~\cite[Section~3.3]{Pikhurko21bcc}). Now, the lemma follows by applying~\cite[Lemma 5.17]{Pikhurko21bcc} to the $r$-local function on $(\tilde G,(\tau,\ell))$ that simulates~$\ell'$.\end{proof}

		
		\begin{lemma}\label{lem-borel-ref} Let $G=(V,\mathcal B,E)$ be a Borel digraph with $\maxdeg(G)<\infty$. 
			\part\label{it:a} For every Borel $A\subset V$ there is a maximal independent set $B\subset A$ which is a Borel subset of $V$.
			\part\label{it:b} If $\mathbf R$ is a Borel local rule on $G$ then, for every Borel labelling $f:V\to b$, the set $B_{\mathbf R}(f)\subset V$ of failed clauses is Borel.
			\trap
		\end{lemma}
		\begin{proof} 

			
			Recall that by an independent set in the digraph $G$ we mean an independent set in the underlying loopless Borel graph $\tilde G=(V,\tilde E)$, where $\tilde E$ is defined in~\eqref{eq:TildeE}. Thus Part (a) follows from the corresponding graph result of Kechris, Solecki and Todorcevic~\cite[Proposition~4.2]{KechrisSoleckiTodorcevic99} (which is Theorem~5.6 in~\cite{Pikhurko21bcc}).
			
			For Part (b), fix a Borel $2$-sparse partition $\pi$ of $V$ and use it to encode the local rule $\mathbf R$ by the Borel labelling $\ell_{\mathbf R}$ as above. For a clause $x\in V$, we can decide if it is failed by $f$ by knowing the restriction of $(G,(f,\ell_{\mathbf R},S_\pi),x)$ to $\Var(x)\subset N_G(x,1)$.
			Thus $B_{\mathbf R}(f)$ is Borel by the case $r=1$ of Lemma~\ref{lm:5.7}.
		\end{proof}
		
		A \emph{Borel Moser--Tardos tuple} is a tuple $(G, \mathbf R, \pi)$ 
		where $G$ is a Borel digraph with finite $\maxdeg(G)$, $\mathbf R$ is a Borel local rule on $G$,
		and $\pi$ is a Borel partition of~$V(G)$. One can show that the graph  $\Rel(G)$ is Borel (or, instead, operate with vertex-labellings on the original digraph~$G$ under appropriate encodings).
		Given $\rnd\in b^{\pi\times \NN}$ we 
		define the functions  $\MT^{0}_{\cal M, \rnd}$ and  $\MT^{i+1}_{\cal M, \rnd}$ for $i\in \NN$ by the formulas in 
		Subsection~\ref{mtalik} where we apply Part~\ref{it:a} of Lemma~\ref{lem-borel-ref} to find a maximal Borel $\Rel(G)$-independent subset of $B_{\mathbf R}(\MT^{i}_{\cal M, \rnd})$. Lemma~\ref{lm:5.7} gives by induction on $i$ that each function $\MT^{i}_{\cal M, \rnd}$ is Borel.
		
		A \emph{Borel colouring problem} is a pair $(G,\mathbf R)$ where $G$ is a Borel digraph and $\mathbf R$ is a Borel local rule on $G$.
		
		\begin{theorem}\label{thm-main-borel}
			Let $( G, \mathbf R)$ be a Borel colouring problem such that $G\in \subexp(\nn,\eps,d)$. Let $\De:=\maxdeg(\Rel(G))>0$ and let us assume that 
			$$
			\max_{x\in V(G)} \frac{b^{\eps d}\,|\mathbf R^c(x)|}{b^{|\Var(x)|}} < \frac{1}{\me\De}.
			$$
			Then there exists a satisfying colouring $f\in b^{V(G)}$ which is Borel.
		\end{theorem}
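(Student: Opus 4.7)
The plan is to derive Theorem~\ref{thm-main-borel} from Theorem~\ref{thm-main} by running the Borel Moser-Tardos algorithm with a random seed and extracting a single good realisation. From the strict inequality in the hypothesis, first pick $\delta>0$ small enough that $\max_{x\in V(G)} |\mathbf R^c(x)|/b^{|\Var(x)|}\le 1/((\me\De)^{1+\delta}\, b^{\eps d})$. By Lemma~\ref{lm:BorelPartition}, choose a Borel $3\nn$-sparse finite partition $\pi$ of $V(G)$, so that $\cal M := (G, \mathbf R, \pi)$ is a Borel Moser-Tardos tuple satisfying all the hypotheses of Theorem~\ref{thm-main}. Equip $b^{\pi\times \NN}$ with the uniform product probability measure.

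Next, I would verify by induction on $i$ that $\MT^i_{\cal M, \rnd}(x)$ is Borel as a function of $x$ for each fixed $\rnd$: the base case uses that $S_\pi$ is Borel, and for the inductive step Lemma~\ref{lem-borel-ref}(b) says $\B_{\mathbf R}(\MT^i)$ is Borel and Lemma~\ref{lem-borel-ref}(a) lets one pick $\IB(\MT^i)$ Borelly, after which the recurrence in Subsection~\ref{mtalik} defines $\MT^{i+1}$ in a Borel way. Applying Theorem~\ref{thm-main} to $\cal M$ gives, for every $m\in \NN$,
$$\Pr_{\rnd}\left(\max_{x\in V(G)} h^{\infty}_{\cal M,\rnd}(x) \ge m \right) \le \frac{K(m+1)^{|\pi|}}{(\me\De)^{\delta m}}.$$
The events $\{\max_x h^\infty \ge m\}$ are nested decreasing in $m$, and their probabilities tend to zero as $m\to\infty$, so the intersection $\{\max_x h^\infty=\infty\}$ has measure zero. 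In particular, there is a full-measure set of $\rnd$ for which $h^\infty_{\cal M,\rnd}(x)$ is finite at every $x\in V(G)$.

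Fix any such $\rnd$. Since $h^\infty(x)<\infty$ for every $x$, the sequence $\MT^i_{\cal M,\rnd}(x)$ stabilises after finitely many steps (depending on $x$), so $f(x):=\lim_{i\to\infty}\MT^i_{\cal M, \rnd}(x)$ is a pointwise limit of Borel functions and hence Borel. To see that $f$ satisfies $\mathbf R$, fix $y\in V(G)$ and note that because $\Var(y)$ is finite there is $i_0$ such that no variable in $\Var(y)$ is resampled after step $i_0$; if $y$ were in $\B_{\mathbf R}(f)$ then $y$ would lie in $\B_{\mathbf R}(\MT^i)$ for every $i>i_0$, and by maximality of $\IB(\MT^i)$ either $y$ itself or a $\Rel(G)$-neighbour of $y$ would be in $\IB(\MT^i)$, forcing some variable of $y$ to be resampled at step $i+1$, a contradiction.

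The main obstacle to making the argument fully rigorous is measurability in the uncountable case: to interpret the probabilities above we need $\{(x,\rnd) : h^k_{\cal M,\rnd}(x)\ge m\}$ to be jointly Borel, so that its projection onto the $\rnd$-coordinate is at least analytic and hence universally measurable. This can be checked by induction, using Lemma~\ref{lm:5.7} together with Lemma~\ref{lem-borel-ref} to see that both halves of each iteration (locating violated clauses and choosing a maximal independent subset) are Borel-definable uniformly in $\rnd$. Once this measurability is granted, the above argument goes through as stated.
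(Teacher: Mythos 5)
Your proposal is correct and follows essentially the same route as the paper: choose $\delta$ from the strict inequality, take a Borel $3\nn$-sparse partition via Lemma~\ref{lm:BorelPartition}, check Borelness of each $\MT^i$ by induction using Lemma~\ref{lem-borel-ref}, invoke Theorem~\ref{thm-main} to get a full-measure set of $\rnd$ with $h^\infty$ everywhere finite, and pass to the pointwise limit, verifying satisfaction by the same maximality-of-$\IB$ argument. Your closing remark on measurability of the events $\{\max_x h^\infty_{\cal M,\rnd}(x)\ge m\}$ is a point the paper leaves implicit in the statement of Theorem~\ref{thm-main}, and your suggested resolution is reasonable, so this is added care rather than a divergence.
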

		\begin{proof}
			Let $\de>0$ be such that 
			$$
			\max_{x\in V(G)}\frac{b^{\eps d}\,|\mathbf R^c(x)|}{b^{|\Var(x)|}} \le \frac{1}{(\me\De)^{1+\delta}},
			$$
			and let $\pi$ be a $3\nn$-sparse Borel partition of $V(G)$ with finitely many parts, which exists by Lemma~\ref{lm:BorelPartition}. Let $\cal M$ be the Borel Moser--Tardos tuple $(G, \mathbf R, \pi)$. Now by Theorem~\ref{thm-main}, it holds that
			$\Pr_{\rnd}(\sup_{x\in V(G)} h^\infty(x)>m)$ tends to $0$ as $m\to\infty$. Thus for almost every \(\rnd\) we have that $\sup_{x\in V(G)} h^\infty(x)<\infty$. Let us fix one such instance of $\rnd$. Then we obtain a sequence of Borel colourings $\MT^0,\MT^1,\ldots$ of $V(G)$. Furthermore, for every $x\in V(G)$, the values $\MT^k(x)$ stabilise from some moment on; define $\MT^\infty(x):=\lim_{k\to\infty} \MT^k(x)$ to be this eventual colour. 
			Since $(\MT^\infty)^{-1}(a)=\cup_{n\in\NN}\cap_{k\ge n}(\MT^k)^{-1}(a)$ for all $a\in b$, 
			the function
			$\MT^\infty$ is Borel.
			
			It remains to verify that  $\MT^\infty$ is a satisfying colouring. Take any  $x\in V(G)$. Let $k\in \NN$ be such that 
			no clause sharing a variable with the clause $x$ is resampled after the $k$-th pass; such $k$ exists since these clauses involve finitely many (namely, at most $d^3$) variables in total.
			Thus, for every $l$ larger than $k$, we have that $N_{\Rel(G)}(x,1)\cap  \IB(\MT^l) = \emptyset$. By the maximality of $\IB(\MT^l)$ we have that $x\notin \B(\MT^l)$. Since,  $\MT^\infty$ coincides with $\MT^l$ on all variables of $x$ for $l>k$, we deduce that $x\notin \B(\MT^\infty)$, as desired. This finishes the proof of the theorem.
		\end{proof}
		
		\begin{myrem}\label{rem:noborel} 
			Let us note that the Borel LLL does not hold in full generality on graphs of exponential growth.  Marks~\cite{Marks16} constructed, for every $d\ge 3$, a $d$-regular acyclic Borel graph $F=(V,E)$ with a Borel $d$-edge-colouring $\ell:E\to d$ such that for every Borel vertex-colouring  $c:V\to d$ there is a \emph{bad pair}, namely  adjacent vertices $x,y\in V$ with $c(x)=c(y)=\ell(xy)$. Given $F$ and $\ell$, the problem of finding a colouring $c$ without bad pairs fits into the LLL framework (where the variables and the clauses correspond to the vertices and edges of $F$ respectively, with the clause $\mathbf R(xy)$ for $xy\in E(F)$ stating that the pair $xy$ is not bad). 
			Clearly, for a uniform $d$-colouring, the probability that a given pair is bad is $p:=1/d^2$ while the corresponding dependency graph (with self-loops) has maximum degree $\Delta:=2d-1$. Thus the LLL condition in~\eqref{eq:gap}, namely that $p<1/(\me\Delta)$, is satisfied if $d\ge 5$.  However, there is no Borel satisfying colouring.  
			Conley et al~\cite{ConleyJacksonMarksSewardTuckerdrob20} strengthened the above result of Marks~\cite{Marks16} by showing that the Borel graph $F$ can be additionally assumed to be hyperfinite. (A new and shorter proof of this is presented by Brandt et al~\cite{BrandtChangGrebikGrunauRozhonVidnyanszky24}.) Thus there are hyperfinite counterexamples to a Borel~LLL. 
		\end{myrem}
		
		\hide{
			\begin{myrem}  If we add the standard extra assumptions to Theorem~\ref{thm-main-borel}, namely that
				the underlying graph $G$ is a \emph{topological structured graph} (as in \cite[Definition 2.12]{Bernshteyn23i}, meaning that $V(G)$ is a zero-dimensional topological space such that, roughly speaking, the operation of mapping a vertex to its rooted radius-$R$ ball is continuous for every given $R\in\NN$),
				$V(G)$ has a continuous linear ordering, and the local rule $\mathbf R$ is a continuous function (when encoded as a map $V(G)\to \Pow(b^{d})$ using the induced ordering on each out-neighbourhood) then we can additionally require that the produced satisfying assignment $V(G)\to b$ is a continuous function. The proof is the same except we use the continuous analogues of Lemmas~\ref{lm:BorelPartition} and~\ref{lm:5.7} (which can be found in e.g.~\cite[Section 4.2]{Bernshteyn23i}).\end{myrem}
		}

\begin{ack}
	The authors thank Sebastian Brandt and Andrew Marks for useful discussions. Also, the authors are grateful to the anonymous referees for their careful reading and many helpful comments.
	ChatGPT 5.5 was used to proofread near-final versions of the paper.

	For the purpose of open access, the authors have applied a Creative Commons Attribution (CC-BY) licence to any Author Accepted Manuscript version arising from this
submission.
	\end{ack}

\begin{funding}
	Łukasz Grabowski was supported by ERC Starting Grant 805495.	
	András Máthé was supported by the Hungarian National Research, Development and Innovation Office -- NKFIH, 124749.
		Oleg Pikhurko was supported by ERC Advanced Grant 101020255 and Leverhulme Research Project Grant RPG-2018-424.
\end{funding}


		\bibliographystyle{plain}
		
\begin{bibdiv}
	\begin{biblist}
		
		\bib{Bernshteyn19am}{article}{
			author={Bernshteyn, A.},
			title={Measurable versions of the {L}ov{\'a}sz {L}ocal {L}emma and
				measurable graph colorings},
			date={2019},
			journal={Adv.\ Math.},
			volume={353},
			pages={153\ndash 223},
		}
		
		\bib{Bernshteyn23i}{article}{
			author={Bernshteyn, A.},
			title={Distributed algorithms, the {L}ov\'{a}sz local lemma, and
				descriptive combinatorics},
			date={2023},
			journal={Invent. Math.},
			volume={233},
			pages={495\ndash 542},
		}
		
		\bib{Bernshteyn23a}{article}{
			author={Bernshteyn, A.},
			title={Probabilistic constructions in continuous combinatorics and a
				bridge to distributed algorithms},
			date={2023},
			journal={Adv. Math.},
			volume={415},
			pages={Paper No. 108895, 33},
		}
		
		\bib{BernshteynWeilacher25}{article}{
			author={Bernshteyn, A.},
			author={Weilacher, F.},
			title={{Borel} versions of the {Local} {Lemma} and local algorithms for
				graphs of finite asymptotic separation index},
			date={2025},
			journal={Trans.\ Amer.\ Math.\ Soc.},
			volume={378},
			pages={8183\ndash 8221},
		}
		
		\bib{BernshteynYu25}{article}{
			author={Bernshteyn, A.},
			author={Yu, J.},
			title={Large-scale geometry of {Borel} graphs of polynomial growth},
			date={2025},
			journal={Adv. Math.},
			volume={473},
			pages={Paper 110290},
		}
		
		\bib{BernshteynYu26}{article}{
			author={Bernshteyn, A.},
			author={Yu, J.},
			title={{Borel} local lemma: Arbitrary random variables and limited
				exponential growth},
			date={2026},
			journal={J.\ London Math.\ Soc.},
			volume={113},
			pages={e70610},
		}
		
		\bib{BFHKLRSU16}{inproceedings}{
			author={Brandt, S.},
			author={Fischer, O.},
			author={Hirvonen, J.},
			author={Keller, B.},
			author={Lempi\"{a}inen, T.},
			author={Rybicki, J.},
			author={Suomela, J.},
			author={Uitto, J.},
			title={A lower bound for the distributed {L}ov\'{a}sz local lemma},
			date={2016},
			booktitle={S{TOC}'16---{P}roceedings of the 48th {A}nnual {ACM} {SIGACT}
				{S}ymposium on {T}heory of {C}omputing},
			publisher={ACM, New York},
			pages={479\ndash 488},
		}
		
		\bib{BrandtChangGrebikGrunauRozhonVidnyanszky24}{article}{
			author={Brandt, Sebastian},
			author={Chang, Yi-Jun},
			author={Greb\'{\i}k, Jan},
			author={Grunau, Christoph},
			author={Rozho\v{n}, V\'{a}clav},
			author={Vidny\'{a}nszky, Zolt\'{a}n},
			title={On homomorphism graphs},
			date={2024},
			journal={Forum Math. Pi},
			volume={12},
			pages={Paper No. e10, 20},
		}
		
		\bib{ChandrasekaranGoyalHaeupler10}{inproceedings}{
			author={Chandrasekaran, K.},
			author={Goyal, N.},
			author={Haeupler, B.},
			title={Deterministic algorithms for the {Lov{\'a}sz} {Local} {Lemma}},
			organization={SIAM},
			date={2010},
			booktitle={Proceedings of the twenty-first annual acm-siam symposium on
				discrete algorithms},
			pages={992\ndash 1004},
		}
		
		\bib{ChandrasekaranGoyalHaeupler13}{article}{
			author={Chandrasekaran, K.},
			author={Goyal, N.},
			author={Haeupler, B.},
			title={Deterministic algorithms for the {Lov{\'a}sz} {Local} {Lemma}},
			date={2013},
			journal={SIAM Journal on Computing},
			volume={42},
			pages={2132\ndash 2155},
		}
		
		\bib{ChangPettie19siamjc}{article}{
			author={Chang, Y.-J.},
			author={Pettie, S.},
			title={A time hierarchy theorem for the {{\sf LOCAL}} model},
			date={2019},
			journal={{SIAM} J.\ Computing},
			volume={48},
			pages={33\ndash 69},
		}
		
		\bib{ConleyJacksonMarksSewardTuckerdrob20}{article}{
			author={Conley, C.},
			author={Jackson, S.},
			author={Marks, A.~S.},
			author={Seward, B.},
			author={Tucker-Drob, R.},
			title={Hyperfiniteness and {Borel} combinatorics},
			date={2020},
			journal={J.\ Europ.\ Math.\ Soc},
			volume={22},
			pages={877\ndash 892},
		}
		
		\bib{ConleyTamuz20}{article}{
			author={Conley, C.~T.},
			author={Tamuz, O.},
			title={Unfriendly colorings of graphs with finite average degree},
			date={2020},
			journal={Proc.\ London Math.\ Soc.},
			volume={121},
			pages={828\ndash 832},
		}
		
		\bib{CsokaGrabowskiMathePikhurkoTyros:arxiv}{unpublished}{
			author={Cs{\'o}ka, E.},
			author={Grabowski, {\L}.},
			author={{M\'ath\'e}, A.},
			author={Pikhurko, O.},
			author={K.Tyros},
			title={Borel version of the local lemma},
			date={2016},
			note={E-print arxiv:1605.04877},
		}
		
		\bib{MR0382050}{incollection}{
			author={Erd{\H{o}}s, P.},
			author={Lov{\'a}sz, L.},
			title={Problems and results on {$3$}-chromatic hypergraphs and some
				related questions},
			date={1975},
			booktitle={Infinite and finite sets ({C}olloq., {K}eszthely, 1973; dedicated
				to {P}. {E}rd{\H o}s on his 60th birthday), {V}ol. {II}},
			publisher={North-Holland, Amsterdam},
			pages={609\ndash 627. Colloq. Math. Soc. J\'anos Bolyai, Vol. 10},
		}
		
		\bib{FischerGhaffari17}{incollection}{
			author={Fischer, M.},
			author={Ghaffari, M.},
			title={Sublogarithmic distributed algorithms for {L}ov\'{a}sz local
				lemma, and the complexity hierarchy},
			date={2017},
			booktitle={31 {I}nternational {S}ymposium on {D}istributed {C}omputing},
			series={LIPIcs. Leibniz Int. Proc. Inform.},
			volume={91},
			publisher={Schloss Dagstuhl. Leibniz-Zent. Inform., Wadern},
			pages={Art. No. 18, 16},
		}
		
		\bib{GhaffariGrunauRozhon21}{inproceedings}{
			author={Ghaffari, M.},
			author={Grunau, C.},
			author={Rozho\v{n}, V.},
			title={Improved deterministic network decomposition},
			date={2021},
			booktitle={Proceedings of the 2021 {ACM}-{SIAM} {S}ymposium on {D}iscrete
				{A}lgorithms ({SODA})},
			publisher={[Society for Industrial and Applied Mathematics (SIAM)],
				Philadelphia, PA},
			pages={2904\ndash 2923},
		}
		
		\bib{GrebikRozhon21a}{unpublished}{
			author={Greb{\'\i}k, J.},
			author={Rozho{\v n}, V.},
			title={Classification of local problems on paths from the perspective of
				descriptive combinatorics},
			date={2021},
			note={E-print arxiv:2103.14112},
		}
		
		\bib{GrebikRozhon23a}{article}{
			author={Greb{\'\i}k, J.},
			author={Rozho{\v{n}}, V.},
			title={Local problems on grids from the perspective of distributed
				algorithms, finitary factors, and descriptive combinatorics},
			date={2023},
			journal={Advances in Mathematics},
			volume={431},
			pages={109241},
		}
		
		\bib{KechrisMarks20survey}{unpublished}{
			author={Kechris, A.~S.},
			author={Marks, A.~S.},
			title={Descriptive graph combinatorics},
			date={2020},
			note={Manuscript, 139pp, available at
				\url{https://math.berkeley.edu/~marks/papers/combinatorics20book.pdf}},
		}
		
		\bib{KechrisSoleckiTodorcevic99}{article}{
			author={Kechris, A.~S.},
			author={Solecki, S.},
			author={Todorcevic, S.},
			title={Borel chromatic numbers},
			date={1999},
			journal={Adv.\ Math.},
			volume={141},
			pages={1\ndash 44},
		}
		
		\bib{Kun13arxiv}{unpublished}{
			author={Kun, G.},
			title={Expanders have a spanning {L}ipschitz subgraph with large girth},
			date={2013},
			note={E-print arxiv:1303.4982},
		}
		
		\bib{Linial92}{article}{
			author={Linial, N.},
			title={Locality in distributed graph algorithms},
			date={1992},
			journal={{SIAM} J.\ Computing},
			volume={21},
			pages={193\ndash 201},
		}
		
		\bib{Marks16}{article}{
			author={Marks, A.~S.},
			title={A determinacy approach to {Borel} combinatorics},
			date={2016},
			journal={J.\ Amer.\ Math.\ Soc.},
			volume={29},
			pages={579\ndash 600},
		}
		
		\bib{MR2780080}{incollection}{
			author={Moser, R.~A.},
			title={A constructive proof of the {L}ov\'asz local lemma},
			date={2009},
			booktitle={S{TOC}'09---{P}roceedings of the 2009 {ACM} {I}nternational
				{S}ymposium on {T}heory of {C}omputing},
			publisher={ACM, New York},
			pages={343\ndash 350},
		}
		
		\bib{MR2606086}{article}{
			author={Moser, R.~A.},
			author={Tardos, G.},
			title={A constructive proof of the general {L}ov\'asz local lemma},
			date={2010},
			ISSN={0004-5411},
			journal={J. ACM},
			volume={57},
			number={2},
			pages={Art. 11, 15},
			url={http://0-dx.doi.org.pugwash.lib.warwick.ac.uk/10.1145/
				1667053.1667060},
		}
		
		\bib{books/daglib/0072413}{book}{
			author={Papadimitriou, Ch.~H.},
			title={Computational complexity.},
			publisher={Addison-Wesley},
			date={1994},
			ISBN={978-0-201-53082-7},
		}
		
		\bib{Pikhurko21bcc}{incollection}{
			author={Pikhurko, O.},
			title={Borel combinatorics of locally finite graphs},
			date={2021},
			booktitle={Surveys in combinatorics},
			editor={et~al, K. K.~Dabrpwski},
			series={London Math.\ Soc.\ Lecture Notes Ser.},
			volume={470},
			publisher={Cambridge Univ.\ Press},
			pages={267\ndash 319},
		}
		
		\bib{RozhonGhaffari20}{inproceedings}{
			author={Rozho\v{n}, V.},
			author={Ghaffari, M.},
			title={Polylogarithmic-time deterministic network decomposition and
				distributed derandomization},
			date={2020},
			booktitle={S{TOC} '20---{P}roceedings of the 52nd {A}nnual {ACM} {SIGACT}
				{S}ymposium on {T}heory of {C}omputing},
			publisher={ACM, New York},
			pages={350\ndash 363},
		}
		
		\bib{spencer-note}{article}{
			author={Spencer, J.~H.},
			title={{R}obin {M}oser makes {L}ovász {L}ocal {L}emma {A}lgorithmic!
				{N}otes of {J}oel {S}pencer},
			note={Available at
				\url{https://www.cs.nyu.edu/spencer/moserlovasz1.pdf}},
		}
		
		\bib{MR0491337}{article}{
			author={Spencer, J.~H.},
			title={Asymptotic lower bounds for {R}amsey functions},
			date={1977/78},
			ISSN={0012-365X},
			journal={Discrete Math.},
			volume={20},
			number={1},
			pages={69\ndash 76},
		}
		
		\bib{Thornton22}{article}{
			author={Thornton, R.},
			title={Orienting {B}orel graphs},
			date={2022},
			journal={Proc.\ Amer.\ Math.\ Soc.},
			volume={150},
			pages={1779\ndash 1793},
		}
		
	\end{biblist}
\end{bibdiv}

	\end{document}